\documentclass[reqno,centertags,11 pt, a4]{amsart}
\usepackage{a4wide,amsmath,amsthm,amscd}
\usepackage{amssymb,latexsym}
\usepackage[dvips]{epsfig}
\usepackage{array}
\setcounter{tocdepth}{2}
\newcommand{\F}{\mathbb F}

\newcommand{\Fpbar}{\overline{\F}_p}
\newcommand{\R}{\mathbb R}
\newcommand{\C}{\mathbb C} 
\newcommand{\Q}{\mathbb Q}
\newcommand{\Z}{\mathbb Z}  

\newcommand{\A}{\mathbb A} 
\newcommand{\T}{\mathbf T}
\newcommand{\m}{\mathfrak m}

\linespread{1.1}

\newcommand{\vphi}{\varphi}

\newcommand{\Frob}{\mathrm{Frob}}

\newcommand{\ind}{\mathrm{ind}}

\newcommand{\Gal}{\mathrm{Gal}}

\newcommand{\Sym}{\mathrm{Sym}}

\newcommand{\tr}{\mathrm{tr}}

\newcommand{\Spec}{\mathrm{Spec\mbox{ } }}
\newcommand{\GL}{\mathrm{GL}}
\newcommand{\p}{\mathfrak{p}}

\newcommand{\tensor}{\otimes}
\newtheorem{theorem}{Theorem}[section]
\newtheorem{proposition}[theorem]{Proposition}
\newtheorem{lemma}[theorem]{Lemma}
\newtheorem{dream}[theorem]{Dream}

\theoremstyle{definition}

\newtheorem{conjecture}[theorem]{Conjecture}
\newtheorem{definition}[theorem]{Definition}

\theoremstyle{remark}
\newtheorem{remark}[theorem]{Remark}

\begin{document} 
\title[Serre's modularity conjecture]{Serre's Modularity Conjecture}
\author[Michael M. Schein]{Michael M. Schein}
\address{Department of Mathematics, Bar-Ilan University, Ramat Gan 52900, Israel}
\email{mschein@math.biu.ac.il}
\thanks{The author was supported by grants from the German-Israeli Foundation for Scientific Research and Development and from the Pollack Foundation.}

\maketitle

\section{Introduction}

These notes are based on lectures given by the author at the winter school on Galois theory held at the University of Luxembourg in February 2012.  Their aim is to give an overview of Serre's modularity conjecture and of its proof by Khare, Wintenberger, and Kisin \cite{KWfirst} \cite{KWsecond} \cite{Kisin}, as well as of the results of other mathematicians that played an important role in the proof.  Along the way we will remark on some recent work concerning generalizations of the conjecture.  We have tried as much as possible to concentrate on giving a broad picture of the structure of the arguments and have ignored technical details in places.  Some results are given incomplete statements, where we have chosen not to list technical hypotheses; we request the reader's forbearance.

Let $F$ be a totally real number field.  We will denote by $G_F$ the absolute Galois group $\Gal(\overline{\Q}/F)$.  It was shown in Prof. B\"{o}ckle's lectures in this volume how,  under some hypotheses, a Hilbert modular eigenform $f$ over $F$ gives rise to a compatible system $\{ \rho_{f, v} \}$ of $p$-adic Galois representations; see \cite{Jarvis} for the most general theorem.  These representations are extracted from the cohomology of a suitable algebraic variety, and this construction is more or less the only method we have for obtaining $p$-adic Galois representations.  Therefore the following question is of acute interest: given a Galois representation $\rho: G_F \to \GL_2(\overline{\Q}_p)$, when is $\rho$ {\emph{modular}}, i.e. when does there exist a Hilbert modular eigenform $f$ and a place $v | p$ of $F$ such that $\rho \simeq \rho_{f, v}$?

This question is a very difficult one.  We will split it into two questions, which are still very difficult, by introducing the notion of the reduction of a Galois representation.  The following result is classical; the proof given here is attributed to N. Katz and appears in print, for instance, at the beginning of section 2 of \cite{Skinner}.

\begin{proposition} \label{finiteext}
Let $G$ be a compact Hausdorff group, and let $\rho: G \to \GL_n(\overline{\Q}_p)$ be a continuous representation.  Then $\rho$ is equivalent to a representation $\rho^\prime$ such that $\rho^\prime(G) \subset \GL_n(\mathcal{O}_L)$, where $\mathcal{O}_L$ is the ring of integers of some finite extension $L / \Q_p$.
\end{proposition}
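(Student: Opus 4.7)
My approach is a two-step reduction: first use Baire category to move the image into $\GL_n(L)$ for a single finite extension $L/\Q_p$, then conjugate inside $\GL_n(L)$ so that the matrix entries become integral. Since $G$ is compact and $\rho$ is continuous, $\rho(G)$ is a compact Hausdorff, hence Baire, subspace of $\GL_n(\overline{\Q}_p)$. The field $\overline{\Q}_p$ is a countable union $\bigcup_L L$ of finite extensions of $\Q_p$, because Krasner's lemma guarantees only finitely many such $L$ of each degree inside a fixed algebraic closure. Each $L$ is complete in the $p$-adic absolute value, and therefore closed in $\overline{\Q}_p$, so $\GL_n(L)$ is closed in $\GL_n(\overline{\Q}_p)$, and
\[
\rho(G) \;=\; \bigcup_{L} \bigl( \rho(G) \cap \GL_n(L) \bigr)
\]
exhibits $\rho(G)$ as a countable union of closed subgroups. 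By the Baire category theorem, some $\rho(G) \cap \GL_n(L_0)$ has nonempty interior; a subgroup of a topological group with nonempty interior is open, so it has finite index in the compact group $\rho(G)$. Adjoining to $L_0$ the (finitely many) entries of a chosen set of coset representatives yields a finite extension $L$ with $\rho(G) \subset \GL_n(L)$.

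Once the image sits in $\GL_n(L)$, compactness of $\rho(G)$ forces a uniform bound on the $p$-adic absolute values of its matrix entries, so $\rho(G) \subset \pi^{-N} M_n(\mathcal{O}_L)$ for some $N \geq 0$, where $\pi$ is a uniformizer. The $\mathcal{O}_L$-module
\[
\Lambda \;=\; \sum_{g \in \rho(G)} g \cdot \mathcal{O}_L^n
\]
is then visibly $\rho(G)$-stable, contains $\mathcal{O}_L^n$ (take $g = 1$), and lies inside $\pi^{-N} \mathcal{O}_L^n$. Hence $\Lambda$ is a finitely generated torsion-free module over the discrete valuation ring $\mathcal{O}_L$, so free of rank $n$. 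Choosing an $\mathcal{O}_L$-basis of $\Lambda$ gives the change of basis conjugating $\rho$ to a representation $\rho'$ with $\rho'(G) \subset \GL_n(\mathcal{O}_L)$.

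The crux, I expect, is the Baire step. It depends on two non-trivial inputs: the topological fact that each finite $L \subset \overline{\Q}_p$ is closed (a consequence of completeness and uniqueness of the extended absolute value) and the arithmetic fact that only countably many such $L$ exist inside $\overline{\Q}_p$. Granting these, the lattice construction is essentially the familiar observation that any compact subgroup of $\GL_n(L)$ stabilizes an $\mathcal{O}_L$-lattice.
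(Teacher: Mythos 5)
Your proof is correct and follows the same overall architecture as the paper's: write $\overline{\Q}_p$ (hence $\GL_n(\overline{\Q}_p)$) as a countable union over finite extensions $L/\Q_p$, extract one piece that is a finite-index subgroup, and then stabilize an $\mathcal{O}_L$-lattice to conjugate the image into $\GL_n(\mathcal{O}_L)$. The one genuine divergence is the pigeonhole step: the paper invokes Haar measure on $G$ (some $\rho^{-1}(\GL_n(L))$ has positive measure, hence finite index), whereas you apply the Baire category theorem to $\rho(G)$ (some $\rho(G)\cap\GL_n(L_0)$ has nonempty interior, hence is open, hence of finite index). The two tools are interchangeable here and buy the same conclusion; yours avoids introducing a measure and instead requires you to justify that each $\GL_n(L)$ is closed, which you do correctly via completeness of $L$. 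Your write-up is in fact slightly more careful than the paper's at two points the paper glosses over: you explicitly enlarge $L_0$ so that the coset representatives themselves have entries in $L$ (the paper's lattice generated by $\rho(g_1)\mathcal{O}_L^n,\dots,\rho(g_m)\mathcal{O}_L^n$ implicitly needs this), and you verify that the full $\rho(G)$-orbit lattice $\Lambda=\sum_{g}g\cdot\mathcal{O}_L^n$ is finitely generated by bounding it inside $\pi^{-N}\mathcal{O}_L^n$, rather than generating it by finitely many coset translates. Both routes are sound.
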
 
\begin{proof}
Since $G$ is compact and Hausdorff, it admits a Haar measure $\mu$; without loss of generality, $\mu(G) = 1$.  Now,
$$ \GL_n(\overline{\Q}_p) = \bigcup_{[L: \Q_p] < \infty} \GL_n(L) $$
and hence
$$ G = \bigcup_{[L : \Q_p] < \infty} \rho^{-1}(\GL_n(L)).$$

Since there are countably many finite extensions $L/ \Q_p$, there must be some $L$ such that $\mu(\rho^{-1}(\GL_n(L))) > 0$.  Hence, $\rho^{-1}(\GL_n(L)) \subset G$ is a closed subgroup of finite index.  Then $\rho^{-1}(\GL_n(\mathcal{O}_L))$ is an open subgroup of the compact group $\rho^{-1}(\GL_n(L))$, so it has finite index in it and thus in $G$.  Let $g_1, \dots, g_m$ be a collection of coset representatives for $\rho^{-1}(\GL_n(\mathcal{O}_L))$.  Let $\Lambda \subset L^n$ be the lattice generated by $\rho(g_1) \mathcal{O}_L^n, \dots, \rho(g_m) \mathcal{O}_L^n$.  This is a lattice of maximal rank, so $\Lambda \simeq \mathcal{O}_L^n$.  Furthermore, $\Lambda$ is stable under the action of $G$.  Let $T \in \GL_n(L)$ be a linear transformation that takes $\mathcal{O}_L^n$ to $\Lambda$, and set $\rho^\prime(g) = T^{-1} \rho(g) T$.  
\end{proof}

The proposition above applies, in particular, to continuous representations $\rho: G_F \to \GL_2(\overline{\Q}_p)$ such as we have been considering.  If $\rho$ is equivalent to $\rho^\prime: G_F \to \GL_2(\mathcal{O}_L)$, then we define the reduction $\overline{\rho}$ to be the semisimplification of the composition $\tilde{\rho}^\prime: G_F \overset{\rho^\prime}{\to} \GL_2(\mathcal{O}_L) \to \GL_2(k_L) \to \GL_2(\Fpbar)$, where $k_L$ is the residue field of $L$ and the inclusion $k_L \hookrightarrow \Fpbar$ is induced from $L \hookrightarrow \overline{\Q}_p$.  In other words, $\overline{\rho}$ is the direct sum of the Jordan-H\"{o}lder constituents of $\tilde{\rho}^\prime$.  This definition is independent of all choices, and we call $\overline{\rho}$ the reduction modulo $p$ of $\rho$.

\begin{remark}
Throughout these notes, except for Section \ref{modpllc}, we will usually use $\overline{\rho}$ to denote a mod $p$ Galois representation.  The bar simply serves to emphasize that we are dealing with a mod $p$ representation.  It does not necessarily mean that we have any $p$-adic representation $\rho$ in mind, of which $\overline{\rho}$ is to be the reduction.
\end{remark}

We say that a mod $p$ Galois representation $\overline{\rho} : G_F \to \GL_2(\Fpbar)$ is {\emph{modular}} if it ``arises from geometry'' in a way that will be made precise in the next section.  If $F = \Q$, then $\overline{\rho}$ is modular if and only if there exists a modular eigenform $f$ such that $\overline{\rho} \simeq \overline{\rho_{f, p}}$, while for larger totally real fields we will require a somewhat more subtle notion of modularity.  If a $p$-adic Galois representation $\rho$ is modular, then its reduction modulo $p$ will be modular as well.  We will consider two questions:

\begin{enumerate}
\item Let $\rho: G_F \to \GL_2(\overline{\Q}_p)$ be a $p$-adic Galois representation.  Suppose that $\overline{\rho}$ is modular.  Is $\rho$ modular?
\item Let $\overline{\rho}: G_F \to \GL_2(\Fpbar)$ be a mod $p$ Galois representation.  When is $\overline{\rho}$ modular?
\end{enumerate}

It is clear that if we knew complete answers to both of these questions, their union would resolve the question of when a general $p$-adic representation is modular.  Affirmative responses to the first question are known in a variety of different cases; results of this type are called modularity lifting theorems.  A conjectural response to the second question is given by Serre's modularity conjecture and its generalizations.  Serre's original conjecture, covering the case of $F = \Q$, is now a theorem of Khare, Wintenberger, and Kisin.  However, as we shall see, even if we do not know whether a mod $p$ Galois representation is modular, we can say a lot about the Hilbert modular forms $f$ that it could come from if it were modular.  The research towards resolving each of these two questions is tightly interconnected with work concerning the other, as shall become evident in these notes.

\subsection{Acknowledgements}
The author is very grateful to Sara Arias de Reyna, Lior Bary-Soroker, and Gabor Wiese, the organizers of the Winter School on Galois Theory held in Luxembourg in February 2012, for inviting him to present these lectures.  He is grateful to the audience for their stimulating questions, to Tommaso Centeleghe and Nicolas Billerey for allowing him to make use of the notes they took in his lectures, and to the anonymous referee for a thorough reading.  The exposition given here has in places drawn on expositions of similar material given elsewhere, such as the lectures by Toby Gee, Richard Taylor, and Teruyoshi Yoshida at an MSRI workshop on modularity in 2006, of Richard Taylor at the summer school on Serre's conjecture at Luminy in 2007, and those by Fred Diamond at the Galois Trimester at the Institut Henri Poincar\'{e} in Paris in 2010.  All errors and inaccuracies are, of course, entirely the responsibility of the author.

\section{Statement of Serre's modularity conjecture} \label{statement}
\subsection{The classical conjecture}
If we fix an embedding $\overline{\Q} \hookrightarrow \C$, then complex conjugation is a well-defined element $c \in G_{\Q}$.  Since it is an involution, any Galois representation $\overline{\rho}: G_{\Q} \to \GL_2(\Fpbar)$ must send $c$ to a matrix with determinant $\pm 1$.  We say that $\overline{\rho}$ is {\emph{odd}} if $\det \overline{\rho}(c) = -1$.  Similarly, if $F$ is a totally real field with $[F: \Q] = d$, then the $d$ embeddings $F \hookrightarrow \R$ induce $d$ complex conjugation automorphisms $c_1, \dots, c_d \in G_F$.  We say that $\overline{\rho}: G_F \to \GL_2(\Fpbar)$ is {\emph{totally odd}} if $\det \overline{\rho}(c_i) = -1$ for each $i = 1, \dots, d$.

The original statement of Serre's conjecture, which essentially dates back to the 1960's, appeared in some cases in \cite{SerreConjFirst}, and was properly published only in \cite{SerreConj}, is the following.

\begin{conjecture}
Let $\overline{\rho}: G_{\Q} \to \GL_2(\Fpbar)$ be a mod $p$ Galois representation.  If $\overline{\rho}$ is continuous, irreducible, and odd, then there exists a modular eigenform $f \in S_k(\Gamma_1(N))$, for some weight $k$ and level $N$, such that $\overline{\rho} \simeq \overline{\rho_{f, p}}$.
\end{conjecture}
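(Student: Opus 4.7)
The plan is to follow the architecture of the Khare--Wintenberger--Kisin proof, which is a multi-layered induction using compatible systems, modularity lifting theorems, and potential modularity. First I would invoke the refined form of Serre's conjecture (the so-called weight and level recipes, established by Ribet, Edixhoven, Diamond, and others), which implies that to exhibit a modular eigenform $f \in S_k(\Gamma_1(N))$ realising $\overline{\rho}$ it suffices to produce one of \emph{any} weight and level whose associated mod $p$ representation is isomorphic to $\overline{\rho}$. In particular, we have great flexibility in choosing the auxiliary data, and may freely modify the level and weight to make other arguments go through.

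The central mechanism is the switching of primes via compatible systems. Starting from $\overline{\rho}$, I would construct a $p$-adic Galois lift $\rho : G_{\Q} \to \GL_2(\overline{\Q}_p)$ with carefully prescribed local behaviour (for instance, crystalline or ordinary at $p$, minimally ramified elsewhere) by solving a Fontaine--Mazur type deformation problem. Using the potential modularity results of Taylor, which combine a Moret-Bailly type theorem with modularity lifting across solvable totally real extensions, one can embed $\rho$ into a strictly compatible system $\{\rho_\ell\}$ of $\ell$-adic Galois representations. Once such a system is available, modularity at one prime $\ell$ propagates: if $\overline{\rho_\ell}$ is already known to be modular and an appropriate modularity lifting theorem applies to $\rho_\ell$, then $\rho_\ell$ itself is modular, and compatibility forces $\rho$, hence $\overline{\rho}$, to be modular.

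The induction then proceeds along a carefully built chain of primes $p = p_0, p_1, \ldots, p_N$ terminating in a small prime $p_N \in \{2,3,5,7\}$, for which the conjecture is settled by classical means: Tate's theorem rules out continuous, irreducible, odd representations $G_{\Q} \to \GL_2(\overline{\F}_2)$ unramified outside $2$, Serre handled $p=3$ similarly, and work of Brumer--Kramer, Schoof, and others closes the remaining small-discriminant cases. At each step one must arrange, using the flexibility from the epsilon conjecture, that the modularity lifting theorem applied at $p_i$ has its hypotheses satisfied: sufficiently large residual image, the right local shape (often Barsotti--Tate or crystalline of small weight), and compatibility with the system constructed at the previous step.

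The hardest part is the residually degenerate situation in which $\overline{\rho}|_{G_{\Q(\zeta_p)}}$ becomes reducible, where the original Taylor--Wiles method breaks down because the relevant Selmer groups cannot be made as small as the patching argument requires. Kisin's modularity lifting theorems, resting on his detailed analysis of local deformation rings (particularly the geometry of potentially Barsotti--Tate deformation spaces) and on patched completed cohomology, are the device that overcomes this obstruction and closes the induction in all cases, including $p=2$. Beyond this genuinely deep input, the remaining challenge is essentially combinatorial: ensuring that at every link in the induction the available lifting theorem actually applies, and that the chain of primes really does descend to a solved base case.
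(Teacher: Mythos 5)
Your proposal follows essentially the same architecture as the paper's sketch of the Khare--Wintenberger--Kisin proof: reduce to producing a modular form of any weight and level via the weight/level optimization results, construct minimally ramified $p$-adic lifts by deformation theory, embed them in strictly compatible systems using Taylor's potential modularity (Moret-Bailly plus solvable base change), and induct along a chain of primes using modularity lifting theorems, terminating in the base cases of Tate ($p=2$) and Serre ($p=3$). The only differences are cosmetic (e.g.\ which small-prime base cases and which authors you cite), so this matches the paper's argument.
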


This statement has a natural generalization for totally real fields:

\begin{conjecture}[Weak Serre conjecture] \label{weakserre}
If $F$ is a totally real field and $\overline{\rho}: G_F \to \GL_2(\Fpbar)$ is continuous, irreducible, and totally odd, then it is modular.
\end{conjecture}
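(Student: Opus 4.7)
The plan is to adapt the strategy of Khare--Wintenberger--Kisin, which established the $F = \Q$ case, to the totally real setting. The overall structure would be an elaborate induction in which modularity lifting theorems and the construction of compatible systems with carefully prescribed local properties propagate modularity between residue characteristics, and reduce a general $\overline{\rho}$ to a base case of small weight and level.

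First I would invoke the generalization of the \emph{strong} Serre conjecture, which predicts for each continuous, irreducible, totally odd $\overline{\rho}$ an explicit set of Serre weights $W(\overline{\rho})$ and a level $N(\overline{\rho})$; by the weight part and level part conjectures it would suffice to produce a Hilbert modular eigenform of the predicted weight and level whose associated mod $p$ representation matches $\overline{\rho}$. Next I would apply a modularity lifting theorem for Hilbert modular forms (in the style of Kisin, Gee, and Barnet-Lamb--Gee--Geraghty) to reduce the problem to producing \emph{any} characteristic-zero lift that is modular: by Question (1) of the introduction, the existence of such a lift suffices to exhibit the modularity of $\overline{\rho}$.

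The crucial tool is then \emph{potential modularity}, due in this setting to Taylor: after base change to some auxiliary totally real Galois extension $F'/F$, the restriction $\overline{\rho}|_{G_{F'}}$ admits a modular $p$-adic lift. One embeds this lift into a compatible system $\{\rho_\lambda\}$ of representations of $G_{F'}$, chooses an auxiliary prime $\ell$ at which the mod $\ell$ member has image whose behaviour can be controlled (for instance via congruences to a form of known modularity, or via residual reducibility permitting a direct construction), and transfers modularity from residue characteristic $p$ to residue characteristic $\ell$. Cyclic base change of Langlands--Arthur--Clozel then descends the conclusion from $F'$ to $F$. Iterating this procedure with carefully constructed ``safe chains'' of primes, one aims to reduce $\overline{\rho}$ to a representation of minimal weight and level for which modularity can be verified directly.

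The hard part will be closing the induction at the base case. Over $\Q$, Serre's original bounds force $2 \le k \le p+1$ and suitably small level, and one uses the vanishing $S_k(\mathrm{SL}_2(\Z)) = 0$ for small $k$ to derive a contradiction that anchors the whole scheme. Over a general totally real $F$ no analogous vanishing is available: Hilbert modular cusp forms of parallel weight two and level one routinely exist, the geometry of Hilbert modular varieties is substantially more intricate, and even the modularity lifting theorems one would invoke carry residual hypotheses (on the image of $\overline{\rho}|_{G_{F(\zeta_p)}}$, on distinction at primes above $p$, and on the existence of ordinary or potentially Barsotti--Tate lifts) that can fail to be preserved under the base-change steps. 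Consequently a proof of Conjecture \ref{weakserre} in full generality would require either a new mechanism supplying the base case (perhaps through automorphic induction from CM subfields, or through a sufficiently strong unconditional potential-automorphy theorem combined with a transfer of modularity along compatible systems) or a complete dispensing with the inductive skeleton in favour of a direct automorphy statement; it is the absence of such a ``ground floor'' that has kept the conjecture open beyond $F = \Q$.
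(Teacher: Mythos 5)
The statement you were asked to prove is Conjecture \ref{weakserre}, and the paper does not prove it: it is stated as a conjecture, is known only for $F = \Q$ (Khare--Wintenberger--Kisin), and is explicitly described in the paper's final section as open for every totally real field with $[F:\Q]>1$. Your proposal therefore cannot be measured against a proof in the paper; it can only be measured against the paper's sketch of the $F=\Q$ argument and its closing discussion of why that argument fails to generalize. On that score your outline is accurate: the machinery you describe --- potential modularity after an auxiliary totally real base change, construction of characteristic-zero lifts, embedding into compatible systems, modularity lifting theorems, and an induction that transfers modularity between residue characteristics --- is precisely the wish list of Section \ref{wishlist} and the inductive skeleton sketched in the last section of the paper.

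The genuine gap, which you correctly name but do not (and with current technology cannot) fill, is the base case of the induction. Over $\Q$ the induction is anchored by the theorems of Tate ($p=2$) and Serre ($p=3$), which use discriminant bounds to show that no continuous, irreducible, odd $\overline{\rho}: G_\Q \to \GL_2(\Fpbar)$ unramified outside $p$ exists for those primes, so that the level one conjecture holds vacuously there. The paper emphasizes that for a general totally real field this anchor is unavailable in principle: for all $p$ there exist continuous, irreducible, totally odd mod $p$ representations of $G_F$ unramified outside $p$ (the paper cites the example $F=\Q(\sqrt{29})$ from the introduction of \cite{BDJ}), so the conjecture is never vacuously true, and one would need to establish a sufficiently large supply of genuine, non-vacuous base cases before any induction could begin. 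No mechanism for producing these is known. Your text should therefore be read as a correct account of why Conjecture \ref{weakserre} remains open, not as a proof of it; it supplies no new idea for the missing ground floor, and accordingly does not establish the statement.
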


While this conjecture is already very powerful, one generally wants to know in what weights and levels to look for a modular form giving rise to $\overline{\rho}$.  In fact, Serre gave such a strengthened version of his own conjecture, in which he specified a minimal weight $k(\overline{\rho})$ and level $N(\overline{\rho})$ such that there should exist a modular eigenform $f \in S_{k(\overline{\rho})}(\Gamma_1(N(\overline{\rho})))$ with $\overline{\rho} \simeq \overline{\rho_{f,p}}$.  We shall not give the explicit formulae for $k(\overline{\rho})$ and $N(\overline{\rho})$ here, but the reader will be able to extract them from our statement of a generalized conjecture later on.

\subsection{Serre weights} \label{serrewtsection}
One of the most basic and useful facts about mod $p$ representation theory, and one which is responsible for much of the difference in flavor between it and representation theory in characteristic zero, is the following.  A proof may be found in \cite{EdixhovenFLT}.

\begin{proposition} \label{basicfact}
Let $G$ be a profinite group, let $P \subset G$ be a normal pro-$p$-group, and let $\tau: P \to \GL(V)$ be a continuous finite-dimensional representation of $P$ on an $\Fpbar$-vector space $V$.  Let $V^P = \{ v \in V: \forall a \in P, \tau(a)v = v \}$.  Then $V^P \neq \{ 0 \}$.  Moreover, if $\tau$ is irreducible, then $V^P = V$.
\end{proposition}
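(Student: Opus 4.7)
The plan is to reduce $\tau$ to a representation of a finite $p$-group, and then induct on its order. For the reduction: since $V$ is a finite-dimensional $\Fpbar$-vector space (with $\Fpbar$ regarded as discrete), $\GL(V)$ inherits the discrete topology, so the continuous image $\tau(P)$ of the compact group $P$ is a finite subgroup of $\GL(V)$. As a finite continuous quotient of a pro-$p$-group it is itself a finite $p$-group. Replacing $P$ by $\tau(P)$, I may therefore assume $P$ is a finite $p$-group.

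I then induct on $|P|$. The case $|P|=1$ is trivial. For the inductive step I exploit the fact that any nontrivial finite $p$-group has a nontrivial centre; pick $z \in Z(P)$ of order $p$. The operator $\tau(z)$ satisfies $\tau(z)^p = \id$, and in characteristic $p$ we have the identity $X^p - 1 = (X-1)^p$, so the minimal polynomial of $\tau(z)$ divides $(X-1)^p$. Consequently $1$ is the only eigenvalue of $\tau(z)$ and the subspace $W := V^{\langle z \rangle}$ is nonzero. Because $z$ is central in $P$, the subspace $W$ is stable under all of $\tau(P)$, so the inductive hypothesis applied to the smaller $p$-group $P/\langle z \rangle$ acting on $W$ produces a nonzero vector fixed by all of $P$. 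This vector lies in $V^P$, completing the first assertion.

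For the second assertion, observe that $V^P$ is tautologically a $P$-subrepresentation of $V$. If $\tau$ is irreducible as a $P$-representation and $V^P \neq 0$ by what we have just shown, then $V^P = V$. (In the typical application one starts with an irreducible representation of a larger profinite group $G \supset P$ and restricts; normality of $P$ in $G$ then ensures that $V^P$ is also $G$-stable, and the same dichotomy yields $V^P = V$ from the $G$-irreducibility.) The only subtlety in the argument lies in the reduction to a finite $p$-group, where the topology on $\GL(V)$ must be chosen so that continuity actually forces finiteness of the image; once this is in place the remainder is standard modular linear algebra.
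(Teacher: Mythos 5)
Your proof is correct, but it takes a genuinely different route from the paper's. Both arguments start by using continuity and compactness to reduce to the case where the acting group is a finite $p$-group. From there the paper descends to a model $W$ of the representation over a finite extension of $\F_p$, so that $W$ is a finite set of $p$-power cardinality, and counts orbits: every non-trivial $P$-orbit has size divisible by $p$, hence $|W^P| \equiv |W| \equiv 0 \pmod{p}$ and $W^P$ contains a non-zero vector. You instead stay over $\Fpbar$ and induct on the order of the finite $p$-group, using the non-triviality of its centre together with the identity $X^p - 1 = (X-1)^p$ in characteristic $p$ to show that a central element of order $p$ acts unipotently, so its fixed space is non-zero and $P$-stable. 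The counting argument is shorter but relies on the (somewhat glossed-over) descent to a finite field; your unipotence-plus-induction argument avoids that descent at the cost of invoking the class equation for finite $p$-groups. For the second assertion, your observation that $V^P$ is tautologically a non-zero subrepresentation, hence all of $V$ when $\tau$ is irreducible, is the literal reading of the statement; the paper instead works with irreducibility over the ambient group $G$ and uses normality of $P$ to see that $V^P$ is $G$-stable, which is the form needed in the applications and which you also note parenthetically. Both treatments are valid.
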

\begin{proof}
Clearly we may restrict to the case where $V$ is irreducible as a $G$-module.  Since $\tau$ is continuous and hence has finite image, we in fact have $\tau: G \to \GL(W)$, where $W$ is a finite-dimensional vector space over a finite extension of $\F_p$.  Since $P$ is normal in $G$, clearly $W^P \subset W$ is a sub-$G$-module.  Since $P$ is a pro-$p$-group, every non-trivial orbit of the $P$-action on $W$ must have cardinality divisible by $p$.  However, the cardinality of $W$ is itself divisible by $p$.  Hence, $| W^P | \neq 1$, and thus $W^P = W$.
\end{proof}

\begin{definition} \label{srwtdef}
Let $F$ be a number field, let $v$ be a place of $F$, and let $k_v = \mathcal{O}_F / v$ be the residue field at $v$.
\begin{enumerate}
\item A {\emph{Serre weight}} is an irreducible $\Fpbar$-representation of $\GL_2(\mathcal{O}_F / p)$.
\item A {\emph{local Serre weight at $v$}} is an irreducible $\Fpbar$-representation of $\GL_2(k_v)$.
\end{enumerate}
\end{definition}

Since $\GL_2(\mathcal{O}_F / p)$ is a finite group, there are only finitely many Serre weights for any number field $F$.  Moreover, suppose that the ideal $p \mathcal{O}_F$ decomposes into prime factors as $p \mathcal{O}_F = \p_1^{e_1} \p_2^{e_2} \cdots \p_r^{e_r}$.  Then by the Chinese remainder theorem, $\GL_2(\mathcal{O}_F / p) = \GL_2(\mathcal{O}_F / \p_1^{e_1}) \times \cdots \times \GL_2(\mathcal{O}_F / \p_r^{e_r})$.  For each $1 \leq i \leq r$, let $k_i$ denote the residue field $\mathcal{O}_F / \p_i$.  Since the kernel of the natural projection
$$ \GL_2(\mathcal{O}_F / p) \to \GL_2(k_1) \times \cdots \times \GL_2(k_r)$$
is a $p$-group, all Serre weights factor through this projection by Proposition \ref{basicfact}.  It follows that all Serre weights have the form
$$ \sigma = \bigotimes_{v | p} \sigma_v,$$
where $\sigma_v$ is a local Serre weight at $v$.  The representation theory of $\GL_2$ of a finite field is well known \cite{Green}, and the distinct local Serre weights at $v$ are precisely the following:
$$ \sigma_v = \bigotimes_{\tau: k_v \hookrightarrow \Fpbar} \det\nolimits^{w_\tau} \tensor (\Sym^{r_\tau} k_v^2 \tensor_{k_v, \tau} \Fpbar).$$

Here, $k_v^2$ is the standard action of $\GL_2(k_v)$ on a two-dimensional vector space over $k_v$, whereas $ 0 \leq r_\tau \leq p - 1$ for all $\tau$ and $0 \leq w_\tau \leq p - 1$ for all $\tau$, with the stipulation that the $w_\tau$ are not all $p - 1$.  

What is the connection between these Serre weights and the modularity of Galois representations?  Observe, first of all, that if $F = \Q$, then the Serre weights are just $\det^w \tensor \Sym^r \Fpbar^2$, where $0 \leq r \leq p - 1$ and $0 \leq w \leq p - 2$.  Let $\overline{\rho}: G_{\Q} \to \GL_2(\Fpbar)$ be a mod $p$ Galois representation, and let $\T$ be the Hecke algebra generated over $\Z$ by the Hecke operators $T_l$ for $l \nmid p N(\overline{\rho})$.  Define the maximal ideal $\m_{\overline{\rho}} \subset \T$ to be the kernel of the map
\begin{eqnarray*}
\T & \to & \Fpbar \\
T_l & \mapsto & \tr \overline{\rho}(\Frob_l).
\end{eqnarray*}

The following was observed by Ash and Stevens \cite{ASCrelle}.  

\begin{proposition} \label{ashstevens}
Let $k \geq 2$.  A Galois representation $\overline{\rho}: G_{\Q} \to \GL_2(\Fpbar)$ is modular of level $N$ and weight $k$ if and only if $H^1(\Gamma_1(N), \Sym^{k-2} \Fpbar^2)_{\m_{\overline{\rho}}} \neq 0$.
\end{proposition}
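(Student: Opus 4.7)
The plan is to use the Eichler-Shimura isomorphism as the bridge between weight-$k$ cuspforms of level $N$ and group cohomology. Over $\C$ it provides a Hecke-equivariant identification
\[
H^1(\Gamma_1(N), \Sym^{k-2}\C^2) \cong S_k(\Gamma_1(N)) \oplus \overline{S_k(\Gamma_1(N))} \oplus \mathrm{Eis}_k(\Gamma_1(N)),
\]
so that the systems of Hecke eigenvalues occurring on the left are exactly those arising from (cuspidal or Eisenstein) forms of weight $k$ and level $N$.

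To descend to $\Fpbar$-coefficients I would pick a finite extension $L/\Q_p$ whose ring of integers $\mathcal{O}$ contains the Hecke eigenvalues of all normalized eigenforms in $S_k(\Gamma_1(N))$, and work with $H^1(\Gamma_1(N), \Sym^{k-2}\mathcal{O}^2)$. This is a finitely generated $\mathcal{O}$-module; inverting $p$ recovers the Eichler-Shimura decomposition above, while reducing modulo the maximal ideal gives, up to torsion, $H^1(\Gamma_1(N), \Sym^{k-2}\Fpbar^2)$. Both specializations are Hecke-equivariant, and $\m_{\overline{\rho}}$ makes sense in the Hecke algebra acting on each.

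For the forward implication, given $f \in S_k(\Gamma_1(N))$ with $\overline{\rho_{f,p}} \simeq \overline{\rho}$, Eichler-Shimura produces a Hecke eigenclass in $H^1(\Gamma_1(N), \Sym^{k-2}L^2)$ with eigenvalues $a_\ell(f)$; rescaling so that the class is primitive in the free part of $H^1(\Gamma_1(N), \Sym^{k-2}\mathcal{O}^2)$ and reducing yields a nonzero class on which $T_\ell$ acts by $\overline{a_\ell(f)} = \tr\overline{\rho}(\Frob_\ell)$, so the $\m_{\overline{\rho}}$-localization is nonzero. For the reverse implication, nonvanishing of the localization means that $\m_{\overline{\rho}}$ is a maximal ideal of the Hecke algebra acting on the mod-$p$ cohomology; a lifting argument in the style of Deligne-Serre then shows this mod-$p$ eigensystem is the reduction of a characteristic-zero one, which via Eichler-Shimura corresponds to some eigenform $f \in S_k(\Gamma_1(N))$ (or an Eisenstein series, which can only occur when $\overline{\rho}$ is reducible) with $a_\ell(f) \equiv \tr\overline{\rho}(\Frob_\ell) \pmod{p}$ for all $\ell \nmid pN$. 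By Brauer-Nesbitt and Chebotarev this congruence forces $\overline{\rho_{f,p}} \simeq \overline{\rho}$, establishing modularity.

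The main obstacle is the integral bookkeeping: controlling $p$-torsion in $H^1(\Gamma_1(N), \Sym^{k-2}\mathcal{O}^2)$, verifying that Eichler-Shimura really descends Hecke-equivariantly to lattices, and making the Deligne-Serre lifting step genuinely work (which rests on the finiteness of the integral Hecke algebra as an $\mathcal{O}$-module so that every mod-$p$ eigensystem comes from a characteristic-zero maximal ideal). Cleanly separating the Eisenstein contribution from the cuspidal one inside the $\m_{\overline{\rho}}$-localization, so that the $f$ produced can be taken to be cuspidal in the irreducible case, is the other place where technical care is required.
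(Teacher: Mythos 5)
The paper does not actually prove this proposition; it is stated as an observation of Ash and Stevens with a citation to \cite{ASCrelle}, so there is no in-text argument to compare against. Your outline is the standard proof underlying that reference: Eichler--Shimura over $\C$, descent to an $\mathcal{O}$-lattice in $H^1(\Gamma_1(N),\Sym^{k-2}\mathcal{O}^2)$, reduction modulo the maximal ideal for one direction, and a Deligne--Serre lifting of mod-$p$ Hecke eigensystems for the other, with Brauer--Nesbitt and Chebotarev closing the loop. This is sound, and you have correctly identified where the real work lies: the integral torsion (harmless for $2 \leq k \leq p+1$ and $N \geq 4$, where $\Gamma_1(N)$ is free and $\Sym^{k-2}$ has no mod-$p$ invariants, but a genuine issue for large $k$, which is why one reduces to weights at most $p+1$ as in Edixhoven's Theorem 3.4 quoted right after the proposition), the Hecke-equivariance of the integral comparison, and the separation of the Eisenstein contribution when $\overline{\rho}$ is irreducible.
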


We remark that by Theorem 3.4 of \cite{Edixhoven}, any collection of eigenvalues of $\T$ for which there exists an eigenform of some weight has an eigenform of weight at most $p + 1$.  Thus we do not lose generality by concentrating on forms associated to Serre weights.  The twist by a power of the determinant that can occur in the Serre weight corresponds to Nebentypus.

\subsection{Modularity} \label{modularitysection}
Inspired by the observations above, we will formulate a new definition of modularity.  We use quaternionic Shimura curves in place of modular curves.

Recall that $F$ is our fixed totally real field and that $d = [F: \Q]$.  Let $B / F$ be a quaternion algebra that splits at exactly one real place and at all places above $p$ (note that we make no conditions at places away from $p$, so there are no parity issues).  In other words, we are able to fix isomorphisms
\begin{eqnarray*}
B \tensor \R & \simeq & M_2(\R) \times \mathbb{H}^{d-1} \\
B \tensor \Q_p & \simeq & M_2(F \tensor \Q_p).
\end{eqnarray*}

Denote by $\A$ the adeles of $\Q$, and if $S$ is a finite set of places of $\Q$ then we set $\A^S$ to be the adeles away from $S$.
Define the group $G = \mathrm{Res}_{F / \Q} B^\ast$, and let $U \subset G(\A^\infty)$ be an open compact subgroup of the form $U = U_p \times U^p$, where $U^p \subset G(\A^{\infty, p})$ and
$$
U_p = \mathrm{ker} \left( \prod_{v | p} \GL_2(\mathcal{O}_{F_v}) \to \prod_{v | p} \GL_2(k_v) \right).$$

Then the Shimura curve
$$ M_U (\C) = G(\Q) \backslash (G(\A^\infty) \times (\C - \R)) / U$$
has a model over $F$.  Set $V \subset G(\A^\infty)$ to be the following open compact subgroup:
$$ V =  \left( \prod_{v | p} \GL_2(\mathcal{O}_{F_v}) \right) \times U^p.$$

It follows from a simple modification of an argument of Carayol \cite{Carayol} that if $U^p$ is sufficiently small (see \cite{thesispaper} for the precise definition of ``sufficiently small'') then the natural map $M_U \to M_V$ is a Galois cover of Shimura curves with Galois group $V / U \simeq \prod_{v|p} \GL_2(k_v)$.

\begin{definition}
Let $F$ be a totally real field, let $\sigma$ be a Serre weight, and let $\overline{\rho}: G_F \to \GL_2(\Fpbar)$ be a mod $p$ Galois representation.  We say that $\overline{\rho}$ is {\emph{modular of weight $\sigma$}} if there exist a quaternion algebra $B / F$ and an open compact subgroup $U \subset G(\A^\infty)$ as above such that
\begin{equation} \label{modularitydef}
 \overline{\rho} \subset (\mathrm{Pic}^0(M_U)[p] \tensor \sigma)^{\prod_{v|p} \GL_2(k_v)}.
 \end{equation}
\end{definition}

The reader is referred to the introduction of \cite{BDJ} and to Propositions 2.5 and 2.10 of the same paper for a discussion of why the naive notion of modularity (a Galois representation is modular if it arises from a Hilbert modular form of weight $(k_1, \dots, k_d)$) is insufficient and of the connection between this naive modularity and the notion of being modular of a certain Serre weight.

\begin{remark}
It is not hard to show, using the Eichler-Shimura relation, that the condition (\ref{modularitydef}) is equivalent to the following:
\begin{equation*}
\overline{\rho}^\vee = \mathrm{Hom}(\overline{\rho}, \Fpbar) \subset H^1_{\acute{e} t}(M_V \tensor \overline{\Q} , \mathcal{L}_{\sigma}),
\end{equation*}
where $\mathcal{L}_{\sigma}$ is the mod $p$ local system associated to $\sigma$.  While this fact is well-known, the author does not know of a clear proof in the literature.  See Proposition 2.6 of \cite{thesispaper}.
\end{remark}

\begin{remark}
The reader may wonder why we assume throughout that $F$ is a totally real field.  Why do we not work with arbitrary number fields?  This question is strengthened by the fact that the proofs of many of the results about Galois representations $\overline{\rho}: G_F \to \GL_2(\Fpbar)$ that are described below do not use global information about $\overline{\rho}$, but only local information about the restrictions of $\overline{\rho}$ to the decomposition subgroups at various places.  It is very difficult to study modular Galois representations over arbitrary number fields $F$ because of the lack of nice algebraic varieties over $F$ that store automorphic data and in whose cohomology we could look for our Galois representations.  The theory of Shimura varieties over totally real fields has no good analogue over general number fields.  A number of mathematicians have studied the modularity for quadratic imaginary fields: see, for example, \cite{TaylorImagI}, \cite{TaylorImagII}, and \cite{BergerHarcos}.  In this case one can translate the problem to Siegel modular surfaces.  See \cite{BergerKlosin} and the end of \cite{CalegariGeraghty} for modularity lifting theorems in the quadratic imaginary case.
\end{remark}

\subsection{Serre's weight conjecture} \label{statementofconj}
For every place $v$ of $F$ dividing $p$, denote by $k_v$ the corresponding residue field and let its cardinality be $q_v = p^{f_v}$.  Recall that $G_{F_v}$ can be embedded non-canonically into $G_F$ as follows.  For every finite extension $L/F$, we choose a place $v_L$ of $L$ such that $v_L | v$ and such that if $L \subset L^\prime$, then $v_{L^\prime} | v_L$.  Now let $G_v = \{ \alpha \in G_F : \forall L/F, \alpha(v_L) = v_L \}.$  This $G_v$ is called a decomposition subgroup at $v$, and it is easy to show that $G_v \simeq G_{F_v}$.  If we replace the system $\{v_L\}_L$ by a different compatible system of places, we will get a subgroup conjugate to $G_v$.  Inside $G_v$, we have the inertia subgroup $I_v$ consisting of all $\alpha \in G_v$ such that for each $L / F$, the induced automorphism of $\mathcal{O}_L / v_L$ is trivial.  The wild inertia $P_v$ is the pro-$p$-Sylow subgroup of $I_v$.  Note that the isomorphism $G_v \simeq G_{F_v}$ induces isomorphisms $I_v \simeq \Gal(\overline{\Q}_p / F_v^{nr})$ and $P_v \simeq \Gal(\overline{\Q}_p / F_v^{tr})$, where $F_v^{nr}$ and $F_v^{tr}$ are the maximal unramified and maximal tamely ramified extensions of $F_v$, respectively.  In particular, since $F_v^{tr}$ is a Galois extension of $F_v$, we see that $P_v$ is a normal subgroup of $G_v$.  We have an exact sequence
\begin{equation} \label{ivgv}
 1 \to I_v \to G_v \to \Gal(\Fpbar / k_v) \to 1.
 \end{equation}

Now, let $r \geq 1$ and consider an embedding of fields $\tau: \F_{p^r} \hookrightarrow \Fpbar$.  Fix a uniformizer $\pi_v \in \mathcal{O}_{F_v}$ and define $\psi_\tau: I_v \to \Fpbar^\ast$ to be the following composition of homomorphisms:
$$ I_v \simeq \Gal(\overline{\Q}_p / F_v^{nr}) \to \Gal(F_v^{nr}(\sqrt[p^r - 1]{\pi_v}) / F_v^{nr} ) \simeq \F_{p^r}^{\ast} \stackrel{\tau}{\to} \Fpbar^\ast.$$

Let $\overline{\rho}: G_F \to \GL_2(\Fpbar)$ be a mod $p$ Galois representation.  Then $P_v$ acts trivially on $(\overline{\rho}_{| G_v})^{ss}$ by Proposition \ref{basicfact}.  It follows that 
$((\overline{\rho}_{| G_v})^{ss})_{| I_v}$ factors through a representation of the abelian group $I_v / P_v$ and thus is a sum of characters $\vphi \oplus \vphi^\prime$.  If $\Frob_v$ is a lift to $G_v$ via (\ref{ivgv}) of the map $x \mapsto x^{q_v}$, which is a topological generator of $\Gal(\Fpbar / k_v)$, then $\Frob_v$ acts on $((\overline{\rho}_{| G_v})^{ss})_{| I_v}$ by conjugation.  Therefore, $\{ \vphi, \vphi^\prime \} = \{ (\vphi)^{q_v}, (\vphi^\prime)^{q_v} \}$.  Let $[k_v^\prime : k_v] = 2$.  We have two possibilities:

\begin{enumerate}
\item If $\overline{\rho}_{| G_v}$ is reducible, then $\overline{\rho}_{| I_v} \sim \left( \begin{array}{cc} \vphi & \ast \\ 0 & \vphi^\prime \end{array} \right)$, where $\vphi$ and $\vphi^\prime$ each factor through $k_v$.
\item If $\overline{\rho}_{| G_v}$ is irreducible, then $\overline{\rho}_{| I_v} \sim \left( \begin{array}{cc} \vphi & 0 \\ 0 & \vphi^\prime \end{array} \right)$, where $\vphi$ and $\vphi^\prime$ factor through $(k_v^\prime)^\ast$ and we have $\vphi^\prime = \vphi^{q_v}$ and $\vphi = (\vphi^\prime)^{q_v}$.
\end{enumerate}

We are finally ready to state Serre's conjecture.  Given a mod $p$ Galois representation $\overline{\rho}: G_F \to \GL_2(\Fpbar)$, our aim is to define a set $W(\overline{\rho})$ of Serre weights and then conjecture:

\begin{conjecture}[Strong Serre conjecture] \label{strongserre}
If $\overline{\rho}$ is continuous, irreducible, and totally odd, then it is modular.  Moreover, if $\sigma$ is a Serre weight, then $\overline{\rho}$ is modular of weight $\sigma$ if and only if $\sigma \in W(\overline{\rho})$.
\end{conjecture}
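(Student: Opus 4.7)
The plan is to prove the two assertions separately. First I would establish that every continuous, irreducible, totally odd $\overline{\rho}$ is modular, and then refine this to pin down the exact set of Serre weights that realize it. For the first part, my strategy is to imitate the Khare--Wintenberger--Kisin argument for $F = \Q$, suitably adapted to the totally real setting. The three pillars are: (i) modularity lifting theorems of Taylor--Wiles--Kisin type, promoting modularity from $\overline{\rho}$ to a characteristic zero lift whenever the local deformation conditions at $p$ and at the ramified primes are sufficiently controlled; (ii) Taylor's potential modularity, which, by moving $\overline{\rho}$ inside a Hilbert--Blumenthal or Dwork-style family, produces a totally real solvable Galois extension $F'/F$ over which $\overline{\rho}_{| G_{F'}}$ is already known to be modular; and (iii) the construction of a compatible system of $\lambda$-adic lifts $\{\rho_\lambda\}$ of $\overline{\rho}$, so that the residue characteristic may be varied freely.

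The modularity step itself proceeds by induction on a complexity invariant of $\overline{\rho}$ that packages its conductor away from $p$ and its reduction behaviour at $p$. Starting from the potentially modular case, one descends along the compatible system through a chain of congruences, at each stage choosing a new prime $\ell$ so cleverly that $\overline{\rho}_\ell$ lies in a regime where a modularity lifting theorem applies (ordinary, Fontaine--Laffaille, or potentially semistable of small Hodge--Tate weight) and so that the invariant strictly decreases. The totally real analogue of this argument leans on cyclic base change of Arthur--Clozel to move between $F$ and solvable totally real extensions and on modularity lifting theorems over totally real fields in the style of Kisin and Gee.

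For the weight part of the conjecture, I would treat the two directions separately. The ``only if'' direction is essentially local at places above $p$: given that $\overline{\rho}$ is modular of weight $\sigma$, one reads off the constraint $\sigma \in W(\overline{\rho})$ from integral $p$-adic Hodge theory applied to the crystalline lift coming from the eigenform, via Fontaine--Laffaille modules or their Breuil--Kisin refinements, translated through the Eichler--Shimura style identification of $\overline{\rho}^\vee$ inside \'etale cohomology of $M_V$ with coefficients in $\mathcal{L}_\sigma$. For the ``if'' direction one needs, given $\sigma \in W(\overline{\rho})$, to exhibit a modular form of weight $\sigma$ giving rise to $\overline{\rho}$. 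This is achieved by weight-shifting: starting from any weight produced by the first part of the conjecture, one migrates through the graph of Serre weights by means of congruences between Shimura-curve cohomology in neighbouring weights, theta operators on Hilbert modular forms, and Jacquet--Langlands transfer to switch between quaternion algebras with different local ramification data.

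The principal obstacle, in my view, is not any one of these pieces in isolation but the interlocking local-global nature of the argument. Modularity lifting over totally real fields demands fine control on local deformation rings at places above $p$, and obtaining this control when $\overline{\rho}_{| G_v}$ is highly ramified or non-generic requires a refined theory of local models --- Kisin modules, Breuil--Kisin--Fargues modules, Emerton--Gee moduli stacks --- whose geometry is still actively being worked out. Exactly these non-generic cases also stand between the current state of the art and the full weight part of the conjecture. It is for this reason that Conjecture \ref{strongserre} remains open in full generality over a general totally real $F$, even though the architecture of its eventual proof is by now reasonably clear.
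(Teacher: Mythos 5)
The statement you are asked to prove is a \emph{conjecture}, and the paper does not prove it; it only surveys the strategy that succeeded for $F = \Q$ and the partial ``weak implies strong'' results known for other totally real fields. Your sketch correctly reproduces the architecture of the Khare--Wintenberger--Kisin argument (potential modularity, lifting to compatible systems, modularity lifting theorems, induction on the residue characteristic) and of the Gee-style weight arguments, and you are honest at the end that the general case is open. But as a proof proposal it has a genuine gap, and you have misidentified where the fundamental obstruction lies. You locate the ``principal obstacle'' in the local deformation theory at non-generic places above $p$. That is indeed what blocks the remaining cases of the weight part, but for the modularity assertion itself the decisive failure is elsewhere: the induction has no base case over a general totally real $F$. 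Over $\Q$, Tate and Serre supplied the base cases $p = 2, 3$ by proving that no continuous, irreducible, odd representations unramified outside $p$ exist at all, so the level one conjecture is vacuously true there. For a general totally real field this non-existence fails for \emph{every} prime $p$ --- the paper cites $F = \Q(\sqrt{29})$ as an example --- so one would have to establish a sufficiently large supply of genuinely non-vacuous cases of the conjecture before any descent through compatible systems could get started, and no method for doing so is known. Your chain of congruences therefore has nothing to terminate on.

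A secondary point: in the ``if'' direction of the weight part you invoke weight-shifting via theta operators and congruences between neighbouring weights, whereas the successful arguments in the literature (Gee, Gee--Kisin, Gee--Savitt) instead construct, for each $\sigma \in W(\overline{\rho})$, a potentially Barsotti--Tate lift of prescribed inertial type by explicit work with Breuil or Kisin modules, and then apply a modularity lifting theorem together with the type-theoretic dictionary of Proposition \ref{geelemmaaa} to conclude modularity of weight $\sigma$. The theta-operator route is essentially the classical $F=\Q$ companion-form technology and does not by itself reach the full weight set over larger fields; if you intend that direction you should replace it with the lift-construction strategy.
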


A crucial property of the set $W(\overline{\rho})$ is that it is defined locally.  This means that for each place $v | p$ we will specify a set $W_v(\overline{\rho})$ of local Serre weights at $v$ (recall Definition \ref{srwtdef}) and then define
\begin{equation}
W(\overline{\rho}) = \left\{ \sigma = \bigotimes_{v | p} \sigma_v  :  \forall v | p, \sigma_v \in W_v(\overline{\rho}) \right\}.
\end{equation}

Given a place $v | p$, let $e_v$ be the ramification index of $v | p$, so that $[F_v : \Q_p] = e_v f_v$.  Let $S_v$ be the collection of all field embeddings $k_v \hookrightarrow \Fpbar$.  The definition of $W_v(\overline{\rho})$ involves several cases, corresponding to the cases above.

\begin{enumerate}
\item If $\overline{\rho}_{| G_v}$ is reducible and semisimple (i.e. the direct sum of two one-dimensional representations) then a local Serre weight at $v$
\begin{equation} \label{lclsrwt}
\sigma_v = \bigotimes_{\tau \in S_v} \left( \det\nolimits^{w_\tau} \tensor \Sym^{r_\tau} k_v^2 \tensor_{k_v, \tau} \Fpbar \right)
\end{equation}
is contained in $W_v(\overline{\rho})$ if and only if there exists a subset $A \subset S_v$ and an integer $0 \leq \delta_\tau \leq e_v - 1$ for each $\tau \in S_v$, such that
\begin{equation} \label{smcredcase}
\overline{\rho}_{| I_v} \sim \left( \begin{array}{cc} \prod_{\tau \in A} \psi_{\tau}^{w_\tau + r_\tau + 1 + \delta_\tau} \prod_{\tau \not\in A} \psi_{\tau}^{w_\tau + e_v - 1 - \delta_\tau} & 0 \\
0 & \prod_{\tau \in A} \psi_{\tau}^{w_\tau + e_v - 1 - \delta_\tau} \prod_{\tau \not\in A} \psi_{\tau}^{w_\tau + r_\tau + 1 + \delta_\tau} \end{array} \right).
\end{equation}

\item If $\overline{\rho}_{| G_v}$ is irreducible, then $\sigma_v$ as in (\ref{lclsrwt}) is contained in $W_v(\overline{\rho})$ if and only if for each $\tau \in S_v$ there exists an integer $0 \leq \delta_\tau \leq e_v - 1$ and a lift $\tilde{\tau} : k_v^\prime \hookrightarrow \Fpbar$ of $\tau$ (recall that $k_v^\prime$ is a quadratic extension of $k_v$) such that
$$ \overline{\rho}_{| I_v} \sim \left( \begin{array}{cc} \vphi & 0 \\ 0 & \vphi^{q_v} \end{array} \right),$$
where
\begin{equation} \label{smcirredcase}
\vphi = \prod_{\tau \in S_v} \psi_{\tilde{\tau}}^{(q_v + 1) w_\tau + r_\tau + 1 + \delta_\tau + q_v (e_v - 1 - \delta_\tau)}.
\end{equation}
\item If $\overline{\rho}_{| G_v}$ is indecomposable, i.e. reducible but not semisimple, then 
$$ \overline{\rho}_{| G_v} \sim \left( \begin{array}{cc} \vphi & \ast \\ 0 & \vphi^\prime \end{array} \right),$$
where $\ast$ corresponds to an element $c_{\overline{\rho}} \in \mathrm{Ext}^1(\vphi^\prime, \vphi) \simeq H^1(G_{v}, \vphi (\vphi^\prime)^{-1})$.  For each local Serre weight $\sigma_v$ and each subset $A \subset S_v$ one defines a subspace $L_{A, \sigma_v} \subset H^1(G_{v}, \vphi (\vphi^\prime)^{-1})$.  The definition of $L_{A, \sigma_v}$ is intricate, and we omit it here.  Then a local Serre weight as in (\ref{lclsrwt}) is contained in $W_v(\overline{\rho})$ if and only if there exist a subset $A \subset S_v$ and an integer $0 \leq \delta_\tau \leq e_v - 1$ for each $\tau \in S_v$ such that $((\overline{\rho}_{| G_v})^{ss})_{| I_v}$ has the form of (\ref{smcredcase}) and in addition $c_{\overline{\rho}} \in L_{A, \sigma_v}$.
\end{enumerate} 

As we mentioned above, this conjecture was first stated in the case of $F = \Q$ by Serre several decades ago, in a slightly different language.  We will now indicate how to extract the minimal weight $k(\overline{\rho})$ from the conjecture formulated above.  It follows from Proposition \ref{ashstevens} and the definition of modularity that $\overline{\rho}$ arises from a modular form of weight $2 \leq k \leq p + 1$ (and trivial Nebentypus) precisely when it is modular of the Serre weight $\Sym^{k-2} \Fpbar^2$.  Furthermore, it is easily shown by an argument with Eisenstein ideals that, for arbitrary $k \geq 2$, we have $H^1(\Gamma_1(N), \Sym^{k-2} \Fpbar^2)_{\mathfrak{m}_{\overline{\rho}}} \neq 0$ if and only if $H^1(\Gamma_1(N), W)_{\mathfrak{m}_{\overline{\rho}}} \neq 0$ for some Jordan-H\"{o}lder constituent $W$ of $\Sym^{k-2} \Fpbar$.  Thus, the minimal weight $k(\overline{\rho})$ conjectured by Serre is just the minimal $k \geq 2$ such that $\overline{\rho}$ is modular (in the sense of Conjecture \ref{strongserre}) of some Jordan-H\"{o}lder constituent of $\Sym^{k-2} \Fpbar^2$.  This $k(\overline{\rho})$ is easily computed using the following decomposition.

\begin{lemma}
Let $m > p - 1$.  Then the following holds, where ``ss'' denotes semisimplification:
$$ (\Sym^m \Fpbar^2)^{ss} = (\det \tensor \Sym^{m - p - 1} \Fpbar^2)^{ss} \oplus \Sym^r \Fpbar^2 \oplus \det\nolimits^r \tensor \Sym^{p-1-r} \Fpbar^2,$$
where $r$ is the unique integer in the range $0 \leq r \leq p - 2$ such that $r \equiv m$ modulo $p - 1$.
\end{lemma}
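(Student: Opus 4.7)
The plan is to build the decomposition from a $\GL_2(\F_p)$-equivariant short exact sequence obtained by multiplication with the semi-invariant polynomial $H := x^p y - x y^p \in \Sym^{p+1}\Fpbar^2$. A direct expansion using $a^p = a$ in $\F_p$ shows that $g \cdot H = (\det g)\, H$ for every $g \in \GL_2(\F_p)$: explicitly, for $g = \left(\begin{smallmatrix}a&b\\c&d\end{smallmatrix}\right)$ one has $(ax+by)^p(cx+dy) - (ax+by)(cx+dy)^p = (ax^p+by^p)(cx+dy) - (ax+by)(cx^p+dy^p) = (ad-bc)(x^p y - x y^p)$. Multiplication by $H$ therefore gives a $\GL_2(\F_p)$-equivariant injection
\begin{equation*}
\mu_H \colon \det \tensor \Sym^{m-p-1}\Fpbar^2 \hookrightarrow \Sym^m\Fpbar^2
\end{equation*}
(with the source understood to be zero when $m = p$), whose cokernel $V$ has dimension $(m+1) - (m-p) = p+1$.

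It remains to show $V^{ss} \cong \Sym^r\Fpbar^2 \oplus \det^r \tensor \Sym^{p-1-r}\Fpbar^2$. Since every irreducible $\Fpbar$-representation of $\GL_2(\F_p)$ is of the form $\Sym^s \tensor \det^t$ with $0 \leq s \leq p-1$, the semisimplification $V^{ss}$ is determined by the Brauer character of $V$ on the $p$-regular conjugacy classes. I would compute this character using the explicit basis $\{x^i y^{m-i} : 0 \leq i \leq p-1\} \cup \{x^m\}$ of $V$, obtained by Gr\"obner-style reduction modulo the image of $\mu_H$, and match it against the Brauer character of $\Sym^r \oplus \det^r \tensor \Sym^{p-1-r}$. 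On the split torus $T(\F_p)$ the action is diagonal in this basis, and a straightforward tally (using $r \equiv m \pmod{p-1}$) yields the matching, forcing the composition factor dimensions to be $r+1$ and $p-r$.

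The main obstacle is that the $T(\F_p)$-character alone does not in general determine the semisimplification: already when $p = 3$ the modules $\Fpbar \oplus \Sym^2\Fpbar^2$ and $\Fpbar^{\oplus 3} \oplus \det$ share the same $T(\F_3)$-character despite being non-isomorphic. To resolve this ambiguity I would compute $\operatorname{tr}(g \mid V)$ for representatives $g$ of the non-split semisimple conjugacy classes of $\GL_2(\F_p)$---those elements whose characteristic polynomial is irreducible over $\F_p$---directly from the explicit basis above, and check that the values match those of $\Sym^r \oplus \det^r \tensor \Sym^{p-1-r}$. Linear independence of Brauer characters on the $p$-regular classes then pins down $V^{ss}$ and completes the proof.
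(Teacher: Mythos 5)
The paper states this lemma without proof (it is a classical decomposition going back to Serre, Glover, and Jochnowitz), so there is no in-text argument to compare yours against; judged on its own, your proposal is correct and is essentially the standard proof. The semi-invariance computation for $H = x^p y - x y^p$ is right, multiplication by $H$ is an injective $\GL_2(\F_p)$-map out of $\det \tensor \Sym^{m-p-1}\Fpbar^2$, the cokernel $V$ has dimension $p+1$ with the monomial basis you describe (the reduction $x^p y \equiv x y^p$ terminates), and your caution that the split-torus character alone does not pin down $V^{ss}$ — with a correct counterexample at $p=3$ — is exactly the right point to worry about. Two small things to tighten. First, on the non-split classes you must match the Brauer character, i.e. the sum of Teichm\"{u}ller lifts of the eigenvalues of $g$ on $V$, not the literal trace in $\Fpbar$; the mod $p$ trace loses information (it cannot distinguish $\Fpbar^{\oplus p}$ from $0$). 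Second, diagonalizing a non-split element in your explicit basis of $V$ is awkward, since such an element does not act monomially; it is cleaner to use that $[V] = [\Sym^m \Fpbar^2] - [\det \tensor \Sym^{m-p-1}\Fpbar^2]$ in the Grothendieck group, so that for $p$-regular $g$ with eigenvalues $\lambda \neq \mu$ the Brauer character of $V$ is
$$ \frac{\tilde{\lambda}^{m+1} - \tilde{\mu}^{m+1}}{\tilde{\lambda} - \tilde{\mu}} \; - \; \tilde{\lambda}\tilde{\mu}\,\frac{\tilde{\lambda}^{m-p} - \tilde{\mu}^{m-p}}{\tilde{\lambda} - \tilde{\mu}}, $$
and the required equality with $\chi_{\Sym^r} + (\tilde{\lambda}\tilde{\mu})^r \chi_{\Sym^{p-1-r}}$ follows from $\tilde{\lambda}^{p-1} = \tilde{\mu}^{p-1} = 1$ in the split case and from $\tilde{\mu} = \tilde{\lambda}^p$, $\tilde{\lambda}^{p^2-1} = 1$ in the non-split case (where both sides vanish), the central classes being handled by a separate one-line count. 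With those routine verifications written out, your proof is complete.
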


The first generalization of Serre's conjecture beyond the original case of $F = \Q$, to the case of totally real fields $F$ in which $p$ is unramified, was by Buzzard, Diamond, and Jarvis \cite{BDJ} and circulated for nearly a decade before their paper appeared in print.  A conjecture for arbitrary totally real fields $F$ but semisimple $\overline{\rho}_{| G_v}$ (i.e. the first two of the three cases above) was made by the author; see \cite{ramifiedpaper}, Theorems 2.4 and 2.5.  At around the same time, Herzig \cite{Florianthesis} made a conjecture for $n$-dimensional representations $\overline{\rho}: G_\Q \to \GL_n(\Fpbar)$ for arbitrary $n$, under the assumption that $\overline{\rho}_{| G_p}$ is semisimple.  These conjectures were later restated by other authors \cite{BLGG} \cite{EGH} to cover arbitrary $\overline{\rho}$ at the cost of becoming less explicit: if $\sigma$ is a local Serre weight at $v | p$, then $\sigma \in W_v(\overline{\rho})$ is conjectured to be equivalent to the existence of a $p$-adic lift of $\overline{\rho}$ with some specified local properties.  Such opacity appears already in the indecomposable case of the conjecture of \cite{BDJ}, in the form of the spaces $L_{A, \sigma_v}$.

\subsection{The level in Serre's conjecture} \label{levelsection}
Before proceeding, we will say a few words about the level in Serre's conjecture.  For this, the notion of the Artin conductor of a Galois representation is crucial.  Roughly speaking, the Artin conductor $\mathfrak{n}(\overline{\rho})$ of $\overline{\rho} : G_F \to \GL_2(\Fpbar)$ is an ideal $\mathfrak{n}({\overline{\rho}}) = \prod_v v^{a_{v}}$ of $\mathcal{O}_F$, where $v$ runs over the finite places of $F$ and $a_v$ measures the ramification of $\overline{\rho}$ at $v$.  If $\overline{\rho}$ is unramified at $v$, i.e. if $I_v \subset \ker \overline{\rho}$, then $a_v = 0$.  Otherwise, the exponent $a_v$ reflects how far down the upper ramification filtration of $I_v$ one has to go to find a subgroup contained in $\ker \overline{\rho}$.  More precisely, let $L / F$ be a finite Galois extension such that $\overline{\rho}$ factors through $\Gal(L/F)$; this exists because $\overline{\rho}$ is continuous and $G_F$ is compact, hence $\overline{\rho}$ has finite image.  if $v \nmid p$, then let $v_L$ be a place of $L$ lying above $v$, and for each $i \geq 0$ let $G_i \subset G_{v_L} \simeq \Gal(\overline{L_{v_L}} / L_{v_L})$ be the $i$-th ramification subgroup; see, for instance, Chapter IV of \cite{SerreCL} for the definition.  If $V_{\overline{\rho}}$ is a two-dimensional $\Fpbar$-vector space on which $\overline{\rho}$ acts, then we define

\begin{equation*}
a_v = \sum_{i = 0}^{\infty} \frac{1}{[I_{v_L} : G_i]} \dim_{\Fpbar} (V_{\overline{\rho}} / V_{\overline{\rho}}^{G_i}),
\end{equation*}
where $V_{\overline{\rho}}^{G_i}$ is the subspace of $G_i$-invariants.  More details about the Artin conductor may be found in Chapter VI of \cite{SerreCL}.

When $F = \Q$, any modular Galois representation $\overline{\rho}: G_\Q \to \GL_2(\Fpbar)$ arises from a modular form of weight $N$ that is prime to $p$, and Serre specified a minimal such weight $N(\overline{\rho})$: this is just the prime-to-$p$ part of the Artin conductor of $\overline{\rho}$.  If $F$ is an arbitrary totally real field, we are no longer so lucky.  We can no longer expect $\overline{\rho}$ to always arise in level prime to $p$ (this would correspond to $U_p = \prod_{v | p} \GL_2(\mathcal{O}_{F_v})$ in Section \ref{modularitysection}); see the introduction of \cite{BDJ} for a discussion of why not.  However, the prime-to-$p$ part of the level may conjecturally always be taken to be the prime-to-$p$ part of $\mathfrak{n}(\overline{\rho})$, which means that we may take $U^p = \prod_{v \nmid p} U_1(v^{a_v})$, where the group $U_1(v^{a_v}) \subset \GL_2(\mathcal{O}_{F_v})$ consists of all matrices $\left( \begin{array}{cc} a & b \\ c & d \end{array} \right) \in \GL_2(\mathcal{O}_{F_v})$ such that $a - 1 \in v^{a_v}$ and $c \in v^{a_v}$.

\section{Weak Serre implies strong Serre}
The ``weak'' version of Serre's modularity conjecture (Conjecture \ref{weakserre}) is actually a very strong statement.  It has been proved only in the case $F = \Q$.  This was achieved by Khare, Wintenberger, and Kisin \cite{KWfirst}, \cite{KWsecond}, \cite{Kisin}, and we will sketch some of their methods below.  As we will see at the end of these notes, some of these methods fail crucially whenever $F$ is a totally real field with $[F : \Q] > 1$, so that a major new idea is needed for any substantial further progress on the conjecture.  

In the meantime, much research in the area has focused on proving, in various settings, that the weak version of Serre's conjecture (Conjecture \ref{weakserre}) implies the strong version (Conjecture \ref{strongserre}), in other words that if $\overline{\rho}$ is modular, then it is modular of precisely the predicted Serre weights.  In the case $F = \Q$, this fact has essentially been known since the late 1970's except for a few cases where $p = 2$ and was an important input in the proof of Serre's conjecture (the stubborn cases with $p = 2$ were also settled by Khare, Wintenberger, and Kisin's work).  It was proved by Deligne for $\overline{\rho}_{| G_p}$ reducible and by Fontaine for $\overline{\rho}_{| G_p}$ irreducible.  Fontaine's work was never published, and a (somewhat different) proof of the theorem first appeared in print in \cite{Edixhoven}.

We will now mention some of the ``weak Serre implies strong Serre'' theorems that have been proved in recent years.
The conjecture of \cite{BDJ}, for $F$ in which $p$ is unramified, was proved by Gee \cite{GeeBDJ} for most Serre weights by deformation-theoretic methods; see also \cite{thesispaper} for a more geometric proof of most cases of one direction of this conjecture for locally irreducible $\overline{\rho}$.  We will say more about the methods of these papers in the remainder of this section.  The remaining cases of the Buzzard-Diamond-Jarvis conjecture were attacked in a series of papers by Gee and coauthors, until it was finally proved completely in \cite{GeeKisin}.  The results of \cite{thesispaper} were extended in \cite{ramifiedpaper} to cases where $p$ ramifies in $F$.  Moreover, the conjecture of \cite{ramifiedpaper} was proved, for most cases where $p$ is totally ramified in $F$, by Gee and Savitt \cite{GeeSavitt}.  More cases in the related, but not equivalent, unitary setting were resolved in \cite{GSL}.  Some non-totally ramified cases with $e = f = 2$ were addressed by R. Smith in his Ph.D. thesis at the University of Arizona, but it seems that new ideas are needed to make substantial further progress.

\subsection{A sketch of Gee's argument}
The claim that weak Serre implies strong Serre consists, of course, of two claims in opposite directions:
\begin{enumerate}
\item If $\overline{\rho}$ is modular of weight $\sigma$, then $\sigma \in W(\overline{\rho})$.
\item If $\overline{\rho}$ is modular of some weight and $\sigma \in W(\overline{\rho})$, then $\overline{\rho}$ is modular of weight $\sigma$.
\end{enumerate}

The most successful method for proving ``weak Serre implies strong Serre'' has been that of relating the modularity of $\overline{\rho}$ to the existence of lifts of $\overline{\rho}$ with some specific local properties and then using $p$-adic Hodge theory to investigate the existence of such lifts.  

An important breakthrough was Gee's paper \cite{GeeBDJ}, which proved the following result.  Its statement involves the following definition: a local Serre weight at $v$ is said to be regular if it is of the form (\ref{lclsrwt}) with $1 \leq r_\tau \leq p - 3$ for all $\tau \in S_v$.  A Serre weight $\sigma = \bigotimes_{v | p} \sigma_v$ is called regular if all the $\sigma_v$ are regular.

\begin{theorem} \label{geebdjthm}
Suppose that $p \geq 5$ is unramified in the totally real field $F$, that $\overline{\rho}: G_F \to \GL_2(\Fpbar)$ is modular of some Serre weight, and that $\overline{\rho}_{| G_{F(\zeta_p)}}$ is irreducible.  Suppose that $\sigma$ is a regular Serre weight.  If $\overline{\rho}$ is modular of weight $\sigma$, then $\sigma \in W(\overline{\rho})$.  Conversely, if $\sigma \in W(\overline{\rho})$ and some further technical conditions are satisfied at places $v | p$ where $\overline{\rho}_{| G_v}$ is reducible and $\sigma_v$ arises from $A = S_v$ or $A = \varnothing$ in the recipe of Section \ref{statementofconj}, then $\overline{\rho}$ is modular of weight $\sigma$.
\end{theorem}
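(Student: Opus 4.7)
The plan is to translate modularity of $\overline{\rho}$ in a given Serre weight $\sigma$ into the existence of a characteristic-zero lift of $\overline{\rho}$ with prescribed crystalline structure at the primes above $p$, and then to analyze such lifts via $p$-adic Hodge theory. By the Eichler-Shimura relation and standard congruence arguments between systems of Hecke eigenvalues, $\overline{\rho}$ being modular of weight $\sigma$ is equivalent to the existence of a Hilbert modular eigenform $f$ giving rise to a lift $\rho_f$ of $\overline{\rho}$ whose restriction to $G_{F_v}$, for each $v \mid p$, is crystalline with Hodge-Tate weights at each embedding $\tau \in S_v$ equal to $\{w_\tau, w_\tau + r_\tau + 1\}$, where $(w_\tau, r_\tau)$ are the parameters defining $\sigma_v$ in (\ref{lclsrwt}). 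Since $p$ is unramified in $F$ (so $e_v = 1$) and the regularity hypothesis $1 \leq r_\tau \leq p - 3$ bounds the gap $r_\tau + 1$ between the two Hodge-Tate weights by $p - 2$, these crystalline representations lie squarely in the Fontaine-Laffaille range and can therefore be described, together with their reductions modulo $p$, in terms of strongly divisible Fontaine-Laffaille modules.

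For the first direction, start with a lift $\rho_f$ as above and view $\rho_f|_{G_{F_v}}$ as a Fontaine-Laffaille module with the specified filtration jumps at each $\tau$. The plan is to enumerate all possible reductions modulo $p$: they are organized by a subset $A \subset S_v$ recording which embeddings contribute their upper Hodge-Tate weight to a distinguished sub-line of the reduction (and, in the irreducible case, to a rank-one sub-module over the unramified quadratic extension). A direct computation with Fontaine-Laffaille modules then identifies the resulting characters of $I_v$ with the products of tame fundamental characters $\psi_\tau$ appearing in (\ref{smcredcase}) and (\ref{smcirredcase}), with $\delta_\tau = 0$ forced by $e_v = 1$, and thus shows $\sigma \in W(\overline{\rho})$. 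The regularity hypothesis is what keeps the computation inside the generic regime where the Fontaine-Laffaille classification is a clean equivalence.

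For the converse, assume $\sigma \in W(\overline{\rho})$ and that $\overline{\rho}$ is modular of some initial weight $\sigma_0$. The argument has two steps. First, construct a global $p$-adic lift $\rho$ of $\overline{\rho}$ whose restriction at each $v \mid p$ is crystalline of the Hodge-Tate type prescribed by $\sigma_v$; the recipe for $W_v(\overline{\rho})$ is precisely engineered so that the relevant local crystalline deformation ring is nonzero, and a Khare-Wintenberger style global lifting argument patches the local conditions into a global $\rho$ with the required local behavior. Second, invoke a Kisin-style modularity lifting theorem, starting from the modular lift witnessing $\overline{\rho}$'s modularity in the initial weight $\sigma_0$, to deduce that $\rho$ is itself modular. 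The hypothesis that $\overline{\rho}|_{G_{F(\zeta_p)}}$ is irreducible supplies the big-image condition that makes the Taylor-Wiles patching underlying such theorems go through. Once $\rho$ is modular, its Hodge-Tate type translates back, via the equivalence in the first paragraph, into modularity of $\overline{\rho}$ in weight $\sigma$.

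The main obstacle is the second step of the converse: it requires a modularity lifting theorem strong enough to handle crystalline lifts of non-trivial Hodge-Tate type at every place above $p$ simultaneously, matched with a global lifting construction producing exactly the correct local types. The extra technical conditions stated at reducible places with $A = S_v$ or $A = \varnothing$ reflect the ``ordinary'' boundary of the recipe, where all upper Hodge-Tate weights concentrate on a single diagonal character and the crystalline deformation ring is no longer the correct object to probe; in those cases one must instead produce and recognize \emph{ordinary} lifts, and the corresponding ordinary modularity lifting theorem was not available in the generality needed at the time of \cite{GeeBDJ}. The regularity assumption is likewise an artefact of the Fontaine-Laffaille classification, and removing it requires the more refined Breuil-Kisin machinery used in the later work of Gee and his coauthors cited in the text.
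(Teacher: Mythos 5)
Your overall architecture (translate weight $\sigma$ into a local condition on a characteristic-zero lift, prove the first direction by computing reductions, prove the converse by constructing a lift with that local condition and feeding it to a modularity lifting theorem) matches the shape of Gee's argument as sketched in the text, but the key technical choice is genuinely different, and in the converse direction this difference hides the hard part. You encode $\sigma$ in the \emph{Hodge--Tate weights} of a crystalline lift, with weights $\{w_\tau, w_\tau + r_\tau + 1\}$ at each $\tau$, and then analyze reductions via Fontaine--Laffaille theory. Gee instead fixes the Hodge--Tate weights to be $(0,1)$ at every place and encodes $\sigma$ in the \emph{inertial type} of a potentially Barsotti--Tate lift: by Proposition \ref{geelemmaaa}, modularity of some weight in $\mathrm{JH}(\bigotimes_v(\eta_v\otimes\Fpbar))$ for good types $\eta_v$ (irreducible principal series or cuspidal) is equivalent to the existence of a modular potentially Barsotti--Tate lift of type $\tau_v$; regularity of $\sigma$ is used to choose $\eta_v$ with $\mathrm{JH}(\bigotimes_v(\eta_v\otimes\Fpbar))\cap W(\overline{\rho})=\{\sigma\}$, so that producing one such lift pins down the single weight $\sigma$. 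The local analysis is then carried out with Breuil modules with descent data (equivalently, finite flat group schemes with descent data), not Fontaine--Laffaille modules, and the modularity lifting input is Kisin's potentially Barsotti--Tate theorem (Proposition \ref{geekisinprop}). Your reading of the side hypotheses is essentially right: the extra conditions at reducible places with $A=S_v$ or $A=\varnothing$ come from the (potentially) ordinary components, where the lifting theorem requires matching potential ordinarity with an auxiliary automorphic representation.

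The reason this difference matters, and where your converse has a gap as written, is that the two inputs you invoke --- the existence of a \emph{global} crystalline lift with exactly the prescribed regular Hodge--Tate weights at every $v\mid p$, and a modularity lifting theorem for crystalline representations of that Hodge--Tate type which moreover connects the component containing your constructed lift to the component containing the modular lift of the initial weight $\sigma_0$ --- were precisely what was unavailable at the time of \cite{GeeBDJ}. Even locally, the existence of crystalline lifts of prescribed weight is a nontrivial theorem, and globally one must control the components of the crystalline deformation ring to make Taylor--Wiles--Kisin patching apply; this ``change of weight'' problem is the crux. Gee's detour through types and potentially Barsotti--Tate deformation rings is designed exactly to sidestep it: those deformation rings are accessible via moduli of finite flat group schemes, and the needed modularity lifting theorem was proved by Kisin. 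Your strategy is not wrong --- it is essentially the route taken in the later reformulations and proofs (the lift-theoretic characterizations of $W(\overline{\rho})$ in \cite{BLGG} and the subsequent work culminating in \cite{GeeKisin}) --- but to make it a proof you would need to supply the crystalline lifting and change-of-weight results, which is where the real difficulty of the theorem lives.
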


We will give a very brief sketch of part of the argument of \cite{GeeBDJ} to illustrate the method; the reader is referred to that paper (and the papers cited in it!) for further details.  It should be noted that Gee works with a different notion of modularity than the one given above; he uses definite quaternion algebras, rather than indefinite ones.  It is not possible to translate theorems directly from one setting to the other, but his local arguments can be translated.  In this section, we will assume that $p$ is unramified in the totally real field $F$.  Let $\overline{\rho}: G_F \to \GL_2(\Fpbar)$ be a continuous, irreducible, and totally odd mod $p$ Galois representation.  Recall that in Section \ref{statementofconj} we defined a set $W_v(\overline{\rho})$ of local Serre weights at $v$ for each $v | p$.

Let $v | p$; for the purposes of this section, we will say that a $p$-adic representation $\eta_v$ of $\GL_2(k_v)$ is {\emph{good}} if it is either an irreducible principal series or supercuspidal; in other words, $\eta_v$ is any irreducible $p$-adic representation of $\GL_2(k_v)$ that is not one-dimensional or special.  We regard $\eta_v$ as a representation of the group $\GL_2(\mathcal{O}_{F_v})$ via the obvious inflation.  Then $\eta_v$ has an associated inertial type $\tau_v$, namely a $p$-adic representation of $I_v$ with the property that for any irreducible $p$-adic representation $\pi$ of $\GL_2(F_v)$, we have $\eta_v \subset \pi_{|GL_2(\mathcal{O}_{F_v})}$ if and only if $\mathrm{LLC}(\pi)_{| I_v} \simeq \tau_v$.  Here $\mathrm{LLC}(\pi)$ is the Weil-Deligne representation associated to $\pi$ by the local Langlands correspondence.  See Henniart's appendix to \cite{BMH} for an exposition of the theory of types for $\GL_2$.

\begin{proposition} [\cite{GeeBDJ}, Lemma 2.1.4] \label{geelemmaaa}
Let $\overline{\rho}$ be as above, and for each $v | p$ let $\eta_v$ be a good representation as above.  Then $\overline{\rho}$ is modular of some Serre weight $\sigma \in \mathrm{JH}(\bigotimes_{v | p} (\eta_v \tensor \Fpbar))$ if and only if $\overline{\rho}$ has a modular $p$-adic lift $\rho: G_F \to \GL_2(\overline{\Q}_p)$ such that for each $v | p$, the restriction $\rho_{| G_v}$ is potentially Barsotti-Tate (i.e. potentially crystalline with Hodge-Tate weights $(0,1)$) and $\mathrm{WD}(\rho_{| G_v})_{| I_v} = \tau_v$.
\end{proposition}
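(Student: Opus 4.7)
The plan is to translate between ``modular of weight $\sigma$'' (a mod $p$ statement, $\Fpbar$-coefficients in étale cohomology) and ``existence of a modular lift with inertial type $\tau_v$'' (a characteristic zero statement), using Henniart's defining property of $\tau_v$ and a Deligne--Serre lifting of Hecke eigensystems. For each $v \mid p$ fix a $\GL_2(\mathcal{O}_{F_v})$-stable $\mathcal{O}$-lattice $L_v \subset \eta_v$; its reduction mod $p$ has semisimplification equal to the direct sum of all Jordan--Hölder constituents of $\eta_v \tensor \Fpbar$. The lattice provides the bridge: the coefficient system associated to $\bigotimes_v L_v$ interpolates between $\bigotimes \eta_v$ in the generic fiber and $\bigotimes \eta_v \tensor \Fpbar$ in the special fiber. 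I also use the étale cohomology reformulation of modularity from the Remark after the definition so that coefficient changes are transparent.

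For the ``if'' direction, start from a modular lift $\rho$ of $\overline{\rho}$ with the specified local properties; by local--global compatibility $\rho \simeq \rho_\pi$ for some automorphic representation $\pi$, and the assumption $\mathrm{WD}(\rho|_{G_v})|_{I_v} \simeq \tau_v$ forces $\eta_v \subset \pi_v|_{\GL_2(\mathcal{O}_{F_v})}$ by the defining property of $\tau_v$ as a type. Hence $\rho$ appears in the étale cohomology of $M_V$ with coefficients in $\bigotimes_v L_v$; choosing a $G_F$-stable $\mathcal{O}$-lattice and reducing mod $p$, the Hecke eigensystem of $\overline{\rho}$ must survive in at least one Jordan--Hölder constituent of $\bigotimes \eta_v \tensor \Fpbar$, yielding modularity of $\overline{\rho}$ in some weight $\sigma \in \mathrm{JH}(\bigotimes_{v \mid p} \eta_v \tensor \Fpbar)$.

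For the ``only if'' direction, suppose $\overline{\rho}$ is modular of a weight $\sigma$ that is a constituent of $\bigotimes \eta_v \tensor \Fpbar$. Then the eigensystem of $\overline{\rho}$ appears in $H^1_{\acute{e} t}(M_V \tensor \overline{\Q}, \mathcal{L}_\sigma)$, and therefore in the mod $p$ reduction of the integral cohomology with coefficients in $\bigotimes L_v$. Because the Hecke algebra acting on that integral cohomology is finite over $\mathcal{O}$, a Deligne--Serre argument lifts the residual eigensystem to a characteristic zero one, producing an automorphic form $\pi$ with $\eta_v \subset \pi_v|_{\GL_2(\mathcal{O}_{F_v})}$ for each $v \mid p$ and an associated Galois representation $\rho$ lifting $\overline{\rho}$. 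The type property again gives $\mathrm{WD}(\rho|_{G_v})|_{I_v} \simeq \tau_v$; goodness of $\eta_v$ (irreducible principal series or supercuspidal, \emph{not} special or one-dimensional) rules out monodromy, so $\rho|_{G_v}$ is potentially crystalline, and the Hodge--Tate weights $(0,1)$ are forced by the algebraic weight of $\pi$, giving potentially Barsotti--Tate.

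The main obstacle is this lifting step: one must ensure that the mod $p$ eigensystem of $\overline{\rho}$, known only to appear in $\Fpbar$-coefficient cohomology with $\sigma$-coefficients, actually comes from a characteristic zero form whose local component has type \emph{exactly} $\tau_v$, rather than merely some other representation whose reduction happens to share the constituent $\sigma_v$. The intermediate $\mathcal{O}$-coefficient system $\bigotimes L_v$ is essential precisely because its generic fiber carries the desired type by construction, and finiteness of the associated Hecke algebra over $\mathcal{O}$ propagates residual eigensystems upstairs without type ambiguity.
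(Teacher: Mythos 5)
The paper itself gives no proof of this proposition --- it is quoted directly from \cite{GeeBDJ}, Lemma 2.1.4 --- and your sketch reproduces the standard argument used there: a $\GL_2(\mathcal{O}_{F_v})$-stable lattice in each type, reduction modulo $p$, local--global compatibility at $v \mid p$, and a Deligne--Serre lifting of Hecke eigensystems, with goodness of $\eta_v$ killing the monodromy operator. The one step you pass over quickly --- going from cohomology with coefficients in a single Jordan--H\"{o}lder constituent $\sigma$ to cohomology with coefficients in the full reduction of $\bigotimes_v L_v$ --- does require the filtration argument together with vanishing of cohomology outside the relevant degree after localizing at the non-Eisenstein ideal $\m_{\overline{\rho}}$ (this is where Gee's use of definite quaternion algebras, for which the spaces of automorphic forms are finite free $\mathcal{O}$-modules concentrated in degree zero, simplifies matters), but your identification of the lattice as the bridge and of the type property as what pins down $\tau_v$ is exactly the intended argument.
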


Here, and subsequently, we write $JH(V)$ for the set of Jordan-H\"{o}lder constituents of a representation $V$, whereas $\mathrm{WD}(\rho_{| G_v})$ denotes the Weil-Deligne representation corresponding to the local Galois representation $\rho_{| G_v}$.  The reader is referred to the classic article \cite{Tate} for the correspondence between Galois and Weil-Deligne representations.

Suppose that we know how to prove the first of the two claims at the beginning of this section, namely that if $\overline{\rho}$ is modular of weight $\sigma$, then $\sigma \in W(\overline{\rho})$.  Assuming that, here is a strategy for proving the second claim.  Let $\sigma \in W(\overline{\rho})$ be a Serre weight.  If it is regular, then for each $v | p$ there exists a good $\eta_v$ as above such that $\mathrm{JH}(\tensor_{v|p} (\eta_v \tensor \Fpbar)) \cap W(\overline{\rho}) = \{ \sigma \}$.  Then by Proposition \ref{geelemmaaa} it suffices to find a modular lift $\rho$ with the properties specified in the statement of that proposition.

The most daunting aspect of coming up with a lift $\rho$ of $\overline{\rho}$ that satisfies the conditions of Proposition \ref{geelemmaaa} is clearly that of showing that the $\rho$ we have constructed is modular.  Fortunately, Gee's adaption of a modularity lifting theorem of Kisin comes to the rescue.  This is the first of many close connections that we will see in these lectures between Serre's modularity conjecture and modularity lifting theorems.

\begin{proposition} \label{geekisinprop}
Suppose that the hypotheses of Theorem \ref{geebdjthm} hold and that $\rho: G_F \to \GL_2(\overline{\Q}_p)$ is a lift of $\overline{\rho}$ such that $\rho_{| G_v}$ is potentially Barsotti-Tate and $\mathrm{WD}(\rho_{| G_v})_{| I_v} = \tau_v$ for each $v | p$.  Suppose that there exists a cuspidal automorphic representation $\pi$ of $\GL_2(\A_F)$ such that for every $v | p$, the local Galois representation $\rho_{\pi, v}$ is potentially ordinary if and only if $\rho_{| G_v}$ is potentially ordinary.  Then $\rho$ is modular.
\end{proposition}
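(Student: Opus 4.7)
The approach I would take is a Taylor--Wiles--Kisin patching argument, adapting Kisin's modularity lifting theorem to the potentially Barsotti--Tate setting over a totally real field. The plan is to define a global deformation ring $R$ of $\overline{\rho}$, relate it to a Hecke algebra $T$ acting on a suitable space of quaternionic automorphic forms, and deduce that $\rho$ corresponds to a point of $T$ by showing it lies on the same irreducible component of $\mathrm{Spec}\, R$ as the known modular lift $\rho_{\pi}$.

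First I would set up the global framed deformation problem. Let $R$ parameterize lifts of $\overline{\rho}$ that, at each $v\mid p$, are potentially Barsotti--Tate with inertial type $\tau_v = \mathrm{WD}(\rho_{|G_v})_{|I_v}$, and at the finitely many places $v\nmid p$ of ramification satisfy a minimally ramified condition compatible with both $\rho_{|G_v}$ and $\rho_{\pi,v}$ (after possibly passing through a solvable base change to align the prime-to-$p$ local conditions of $\rho$ and $\rho_{\pi}$; modularity descends along such extensions by Langlands' cyclic base change). By Kisin's theory of moduli of finite flat group schemes, extended via Breuil--Kisin modules with descent data, each local ring $R_v^{\tau_v,\mathrm{BT}}$ is $\mathcal{O}$-flat of known dimension, and its generic fibre has a controlled decomposition into irreducible components, separated by the ordinary versus non-ordinary dichotomy. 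Both $\rho$ and $\rho_{\pi}$ cut out $\overline{\Q}_p$-points of $\mathrm{Spec}\, R[1/p]$.

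Next I would transfer $\pi$ via Jacquet--Langlands to an automorphic form on a totally definite quaternion algebra $D/F$ and form the Hecke module $M$ of $D^{\ast}$-forms of the appropriate tame level, with coefficients in a $\prod_{v\mid p}\GL_2(\mathcal{O}_{F_v})$-representation whose characteristic zero lifts realize the types $\tau_v$, localized at $\m_{\overline{\rho}}$. The Hecke algebra $T$ admits a surjection $R\twoheadrightarrow T$, so $M$ is an $R$-module on which $\rho_\pi$ gives a point. Choosing compatible Taylor--Wiles primes, I would form auxiliary deformation rings $R_n$ and Hecke modules $M_n$ and patch them to produce $M_\infty$ over $R_\infty = R^{\mathrm{loc}}[[x_1,\dots,x_g]]$, where $R^{\mathrm{loc}}$ is the completed tensor product of the local deformation rings. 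A numerical coincidence between dimensions (ensured by the totally odd hypothesis and the irreducibility of $\overline{\rho}_{|G_{F(\zeta_p)}}$, which powers the Taylor--Wiles Euler characteristic computation) forces $M_\infty$ to be maximal Cohen--Macaulay over $R_\infty$; its support is therefore a union of irreducible components of $\mathrm{Spec}\, R_\infty$ containing the component through $\rho_\pi$.

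The conclusion is the statement that $\rho$ and $\rho_\pi$ lie on the same irreducible component of $\mathrm{Spec}\, R_\infty[1/p]$. Since the tame and Taylor--Wiles factors contribute smooth pieces, this reduces to a local statement at each $v\mid p$: that $\rho_{|G_v}$ and $\rho_{\pi,v}$ lie on the same component of $\mathrm{Spec}\, R_v^{\tau_v,\mathrm{BT}}[1/p]$. This is exactly where the hypothesis on matching potential ordinariness is used, since Kisin's component analysis shows that the ordinary locus and non-ordinary locus form distinct unions of components. Granting this, $\rho$ lies in the support of $M_\infty$, hence in that of $M$, and so $\rho$ is modular. The main obstacle, and the technical heart of the argument, is precisely the local analysis: one must describe the irreducible components of the potentially Barsotti--Tate deformation rings with fixed inertial type finely enough to verify that potential ordinariness is a union-of-components condition. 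Everything else --- patching, Jacquet--Langlands, base change --- is by now standard machinery, but the component-level control of $R_v^{\tau_v,\mathrm{BT}}$ is what makes Proposition~\ref{geekisinprop} possible.
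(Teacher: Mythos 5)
Your proposal is correct and follows essentially the route that the paper is pointing to: the paper itself gives no proof of Proposition \ref{geekisinprop}, merely citing it as Gee's adaptation of Kisin's modularity lifting theorem, and the argument in the cited sources is exactly the Taylor--Wiles--Kisin patching you describe, with the decisive local input being that potential ordinariness is a union-of-components condition on the generic fibre of the potentially Barsotti--Tate deformation ring with fixed inertial type. You have also correctly located where the hypotheses $p \geq 5$ and $\overline{\rho}_{|G_{F(\zeta_p)}}$ irreducible enter (the Taylor--Wiles prime selection), which is the one point the paper does comment on.
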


Note that the hypothesis on $\rho_{| G_{F(\zeta_p)}}$ in the statement of Theorem \ref{geebdjthm} is common in modularity lifting theorems \`{a} la Kisin, and this is the point in the proof where it is necessary.

Now we need to construct a lift $\rho$ satisfying the conditions of Proposition \ref{geekisinprop}.  The theory of Breuil modules allows us to translate local conditions on Galois representations into linear-algebraic data.

Let $k$ be a finite field of characteristic $p > 2$, let $W(k)$ be the associated ring of Witt vectors, and let $K_0 = W(k)[1/p]$ be its fraction field.  Let $K / K_0$ be a totally tamely ramified Galois extension of degree $e$.  Let $B \subset K_0$ be a subfield such that there exists a uniformizer $\pi \in \mathcal{O}_K$ satisfying $\pi^e \in B$.  Choose such a $\pi$.  Let $2 \leq k \leq p - 1$ be an integer; this conflict of notation is standard and will produce no confusion.  Let $E / \F_p$ be a finite extension.  The category $\mathrm{BrMod}_{dd, B}^{k-1}$ of Breuil modules with descent data has as objects quintuples $(M, M_{k-1}, \vphi_{k-1}, N, \hat{g})$ such that:
\begin{enumerate}
\item $M$ is a finitely generated $(k \tensor_{\F_p} E)[u]/u^{ep}$-module that is free over $k[u]/u^{ep}$.
\item $M_{k-1}$ is a submodule such that $u^{e(k-1)} M \subset M_{k-1}$.
\item $\vphi_{k-1}: M_{k-1} \to M$ is an $E$-linear and Frobenius-semilinear homomorphism whose image generates $M$ as a $(k \tensor_{\F_p} E)[u]/u^{ep}$-module.  Frobenius-semilinear in this case means that if $a \in k[u]/ u^{ep}$ and $m \in M_{k-1}$, then $\vphi_{k-1}(am) = a^p \vphi_{k-1}(m)$.
\item $N: M \to uM$ is a $(k \tensor_{\F_p} E)$-linear map satisfying
\begin{enumerate}
\item $N(um) = uN(m) - um$ for all $m \in M$.
\item $u^e N(M_{k-1}) \subset M_{k-1}$.
\item $\vphi_{k-1}(u^e N(m)) = - \frac{\pi^e}{p} N(\vphi_{k-1} (m))$ for all $m \in M_{k-1}$.
\end{enumerate}
\item For each $g \in \Gal(K/B)$, there is an additive bijection $\hat{g}: M \to M$ such that
\begin{enumerate}
\item Each $\hat{g}$ commutes with, $\vphi_{k-1}$, $M$, and the $E$-action.
\item $\hat{1}$ is the identity map, where $1 \in \Gal(K/B)$ is the identity automorphism.
\item $\hat{g} \circ \hat{h} = \widehat{g \circ h}$ for all $g, h \in \Gal(K/B)$.
\item $\hat{g}(a u^i m) = g(a)((g(\pi)/\pi)^i \tensor 1) u^i \hat{g}(m)$ for all $a \in k \tensor_{\F_p} E$, $m \in M$, and $i \geq 0$.  To make sense of $g(a)$, note that $k$ is the residue field of $K_0$, hence of $K$, and so is acted on by $\Gal(K/B)$.  We let $\Gal(K/B)$ act trivially on the second component of $k \tensor_{\F_p} E$.
\end{enumerate}
\end{enumerate}

The connection between Breuil modules and potentially Barsotti-Tate Galois representations is evidenced, for instance, by the fact that the category $\mathrm{BrMod}_{dd, B}^1$ is equivalent to the category of finite flat group schemes over $\mathcal{O}_K$ with an action of $E$ and descent data to $B$.  Gee proves that the existence of a lift $\rho$ which is potentially Barsotti-Tate at $v$ of inertial type $\tau_v$ is equivalent to the existence of a Breuil module satisfying certain conditions.  As we see from the definition above, Breuil modules with descent data are complicated objects but are very explicit, and one constructs the needed Breuil module by hand.

The proof for arbitrary (i.e. not necessarily regular) Serre weights follows the same lines, but the theory of Breuil modules, which itself is an extension of Fontaine-Laffaille theory, is not powerful enough.  Here one uses Liu's theory of Kisin modules.

\subsection{Modular weights are predicted ones: some algebraic geometry}  
In this section we will sketch how to prove that if $\overline{\rho}$ is modular of a Serre weight $\sigma$, then $\sigma \in W(\overline{\rho})$.  In order to illustrate the variety of methods applicable to this problem, we will give an algebraic-geometry argument following \cite{thesispaper} and \cite{ramifiedpaper}.  This method was used to obtain the earliest results in this direction, but it has turned out to be less effective than the deformation-theoretic and $p$-adic Hodge-theoretic techniques of which a flavor was given in the previous section.  The reader may, of course, find further details in \cite{thesispaper}.

In this section we will not impose such severe limitations on the ramification of $p$ in the totally real field $F$, but we will suppose that $\overline{\rho}_{| G_v}$ is irreducible for all $v | p$.  For each $v | p$, let $e_v$ be the ramification index of $F_v / \Q_p$.  Let $\sigma = \bigotimes_{v | p} \sigma_v$ be a Serre weight such that for each $v$ and each $\tau \in S_v$ we have $0 \leq r_\tau \leq p - e_v - 1$ (so in particular we are assuming here that $e_v \leq p - 1$).  

Now we will recall some notions from Sections \ref{serrewtsection} and \ref{modularitysection}.  Assume that $\overline{\rho}: G_F \to \GL_2(\Fpbar)$ is modular of weight $\sigma$.  By definition, this implies the existence of a quaternion algebra $B / F$, giving rise to an algebraic group $G$, and an open compact subgroup $V = \left( \prod_{v|p} \GL_2(\mathcal{O}_{F_v}) \right) \times U^p \subset G(\A^\infty)$ such that $H^1_{\acute{e}t}(M_V \tensor \overline{\Q}, \mathcal{L}_\sigma)_{\m_{\overline{\rho}}} \neq 0$.  For each $v | p$, let $U_1^{bal}(v) \subset \GL_2(\mathcal{O}_{F_v})$ be the subgroup of matrices whose reductions modulo $v$ are unipotent upper triangular, i.e. of matrices that are congruent to $\left( \begin{array}{cc} 1 & \ast \\ 0 & 1 \end{array} \right)$ modulo $v$.  Consider the open compact subgroup
$$ U_1^{bal}(p) = \left( \prod_{v | p} U_1^{bal}(v) \right) \times U^p \subset G(\A^\infty).$$

Fix a place $v | p$.  Let $D = W(k_v)$ be a ring of Witt vectors, let $K = F_v^{nr}$ be the fraction field of $D$, let $K^\prime = K(\sqrt[q_v - 1]{\pi_v})$ be a totally tamely ramified extension with $\Gal(K^\prime / K) \simeq k_v^\ast$, and let $D^\prime = \mathcal{O}_{K^\prime}$.  Then $M_{U_1^{bal}(p)}$ has an integral model over $D$, which we shall denote $\mathbf{M}_{U_1^{bal}(p)}$.  Moreover, $\mathbf{M}_{U_1^{bal}(p)} \times_{D} D^\prime$ has a well-behaved special fiber consisting of two smooth curves intersecting transversally at finitely many points.

Let $j: \Gal(K^\prime / K) \to \mathcal{O}_{F_v}^\ast / (1 + v)$ be the isomorphism induced by the Artin reciprocity map of local class field theory (normalized so as to send arithmetic Frobenius to uniformizers).  We have natural actions of $\GL_2(\mathcal{O}_{F_v})$ (coming from the $p$-component of $G(\A^\infty)$) and of $\Gal(K^\prime / K)$ on the special fiber of $\mathbf{M}_{U_1^{bal}(p)} \times_{D} D^\prime$, and Carayol (\cite{Carayol}, 10.3) shows that the action of $\gamma \in \Gal(K^\prime / K)$ is equal to that of $\left( \begin{array}{cc} j(\gamma)^{-1} & 0 \\ 0 & 1 \end{array} \right)$ and $\left( \begin{array}{cc} 1 & 0 \\ 0 & j(\gamma)^{-1} \end{array} \right)$, respectively, on the two components of the special fiber.

Recall that we are assuming that $\overline{\rho}$ is modular of a given Serre weight $\sigma$.  Let $B(k_v) \subset \GL_2(k_v)$ be the Borel subgroup of upper triangular matrices, and let $\theta: B(k_v) \to \Fpbar^\ast$ be a character such that $\sigma_v \in \mathrm{JH}(\mathrm{Ind}_{B(k_v)}^{\GL_2(k_v)} \theta)$.  Let $C$ be the N\'{e}ron model over $D^\prime$ of the curve $\mathrm{Pic}^0(\mathbf{M}_{U_1^{bal}(p)}) \times K^\prime$.  Then $C[p^\infty]$ is a $p$-divisible group, and the reduction $C[p^\infty] \tensor \T / \m_{\overline{\rho}}$ contains a finite piece $G_\theta$ on which the diagonal matrices in $\GL_2(\mathcal{O}_{F_v})$ act via the character $\theta$.  By the main result of \cite{BLR}, $G_\theta[\m_{\overline{\rho}}]_K$ is a direct sum of a finite number of copies of $\overline{\rho}$.  As in Section \ref{statementofconj} above, $\overline{\rho}_{| I_v}$ is a direct sum of two characters, $\vphi$ and $\vphi^\prime$, that satisfy $\vphi^{q_v} = \vphi^\prime$ and $(\vphi^\prime)^{q_v} = \vphi$.  We can pick out a subspace $H \subset G_\theta[\m_{\overline{\rho}}]_K$ of rank $q_v^2$ on which $\Gal(\overline{K}/K) \simeq I_v$ acts by the character $\vphi$.

Let $\F$ be a finite field, sufficiently large so that $\mathrm{im}(\overline{\rho}_{| G_v}) \subset \GL_2(\F)$ and $\F_{q_v^2} \subset \F$.  We will apply Raynaud's theory of vector space schemes \cite{Raynaud}.  An $\F$-vector space scheme over $D$ is a commutative group scheme $W / D$ carrying an action of $\F$.  Let $\mathcal{I} \subset \mathcal{O}_{W}$ be the augmentation ideal, so that $\mathcal{O}_W = \mathcal{I} \oplus \mathcal{O}_D$.  Here $\mathcal{O}_W$ is the structure sheaf of $W$.  It is easy to see that $\mathcal{I}$ decomposes as follows:
$$
\mathcal{I} = \bigoplus_{\chi: \F^\ast \to D^\ast} \mathcal{I}_{\chi},
$$
where $\mathcal{I}_{\chi}$ is the piece of $\mathcal{I}$ on which $\F$ acts via the character $\chi$.  We see that $H$ is an $\F_{q_v^2}$-vector space scheme, and it satisfies the additional crucial property that each $\mathcal{I}_{\chi}$ is a non-zero invertible sheaf.  The vector space scheme $H$ is endowed with two Galois actions:
\begin{enumerate}
\item As we noted before, $\Gal(\overline{K}/K) \simeq I_v$ acts on $H(\overline{K})$ by the character $\vphi$, which we are trying to determine.
\item $\Gal(K^\prime /K) \simeq k_v^\ast$ acts on the cotangent space $\mathrm{cot}(H_{D^\prime} \times_{D^\prime} \Fpbar)$.  Thanks to Carayol's congruences mentioned above, we can express this action explicitly in terms of the character $\theta$.
\end{enumerate}

From Raynaud's work one deduces an explicit relation between these two different Galois actions.  We will not perform the calculations here, but the reader can find them in Section 3 of \cite{ramifiedpaper}.  At the end we obtain a collection $\Phi(\theta)$ of characters $\vphi$ that are compatible with the known action of $\Gal(K^\prime /K)$.  It turns out that these are precisely the characters $\vphi$ arising from mod $p$ Galois representations $\overline{\rho}$ that are modular of some Serre weight $\sigma^\prime \tensor \sigma^v$, where $\sigma^\prime \in \mathrm{JH}(\mathrm{Ind}_{B(k_v)}^{\GL_2(k_v)} \theta)$ and $\sigma^v = \bigotimes_{w | p, w \neq v} \sigma_w$, where $\sigma_w$ is an arbitrary local Serre weight at $w$.  Observe that this is the best result that we can hope to obtain at this stage of the proof, since so far we have only used $\theta$ in our calculations and not $\sigma_v$ itself.

To get a more precise result, we consider all the characters $\theta: B(k_v) \to \Fpbar^\ast$ such that $\sigma_v \in \mathrm{JH}(\mathrm{Ind}_{B(k_v)}^{\GL_2(k_v)} \theta)$.  Clearly all the $\vphi$ associated to $\overline{\rho}$ that are modular of weight $\sigma_v \tensor \sigma^v$ lie in the intersection $\bigcap_{\sigma_v \in \mathrm{JH}(\mathrm{Ind} \theta)} \Phi_\theta$.  We hope that this intersection will turn out to be exactly the collection of representations $\overline{\rho}$ such that $\sigma_v \in W_v(\overline{\rho})$.  The hope comes true when $\sigma_v$ is of the form (\ref{lclsrwt}) with $0 \leq r_\tau \leq p - 1 - e_v$ for all $\tau \in S_v$, which is the reason for the hypothesis to this effect that we made above.   The combinatorial issues that prevent this method from giving us as good a theorem as we would like when $r_\tau$ does not satisfy the constraint $0 \leq r_\tau \leq p - 1 - e_v$ are essentially also what prevents the method of \cite{GeeBDJ} from handling the non-regular Serre weights.

\section{The mod $p$ local Langlands correspondence} \label{modpllc}
The Langlands philosophy postulates a deep connection between algebra and analysis and is one of the main motivations behind modern research on Serre's modularity conjecture and its generalizations.  In this section we will show a very brief glimmer of the connection between them.  Let $n \geq 1$, let $F / \Q_p$ be a $p$-adic field, and let $E$ be a field.  In very rough terms, we would like to have a correspondence between certain Galois representations $\rho: G_F \to \GL_n(E)$ and certain representations of $\GL_n(F)$ on vector spaces over $E$; one of the most difficult parts of this problem is finding the correct definition of ``certain.''  Often one can attach $L$-functions to each of these types of objects, and the $L$-functions of the objects paired by the correspondence should match.  

In the case of $E = \C$, the correspondence was proved by Harris and Taylor \cite{HT} and Henniart \cite{Henniart}, working with Weil-Deligne representations instead of the closely related Galois representations.
If $E = \overline{\F}_l$, with $l \neq p$, then considerable progress was made by Vign\'{e}ras \cite{Vigneras}.  However, if $E = \Fpbar$, then very little is known.  In many respects the study of the mod $p$ local Langlands correspondence is at the stage in its development where the complex local Langlands correspondence was in the 1970's: one tries to classify objects on both sides and pair them up explicitly in a natural way, but no deep underlying theory is yet available.  Moreover, understanding the mod $p$ representation theory of $\GL_n(F)$ has turned out to be remarkably difficult.

In this section, we will use the following notation.  We let $G = \GL_n(F)$ and consider the maximal open compact subgroup $K = \GL_n(\mathcal{O}_F)$ and the center $Z = Z(G) \simeq F^\ast$.  Let $\pi \in \mathcal{O}_F$ be a uniformizer, and let $k_F = \mathcal{O}_F / (\pi)$ be the residue field as usual.  Let $q = p^f$ be the cardinality of $k$.  Let $I \subset K$ be the Iwahori subgroup consisting of matrices that are upper triangular modulo $\pi$, and let $I(1)$ be the pro-$p$-Sylow subgroup of $I$.  For instance, if $n = 2$ then
\begin{eqnarray*}
I & = & \left\{ \left( \begin{array}{cc} a & b \\ c & d \end{array} \right) \in \GL_2(\mathcal{O}_F) : c \in \pi \mathcal{O}_F \right\} \\
I(1) & = & \left\{ \left( \begin{array}{cc} a & b \\ c & d \end{array} \right) \in \GL_2(\mathcal{O}_F) : c \in \pi \mathcal{O}_F; a,d \in 1 + \pi \mathcal{O}_F \right\}.
\end{eqnarray*}

Let $\sigma$ be an irreducible $\Fpbar$-representation of $K$.  By Proposition \ref{basicfact}, $\sigma$ factors through the natural reduction map $K \to \GL_n(k)$, since the kernel of this map is a pro-$p$ group.  Therefore, $\sigma$ arises from an irreducible $\Fpbar$-representation of $\GL_n(k)$ by inflation; these are exactly the objects that we called local Serre weights above in the case $n = 2$.  Moreover, we can view $\sigma$ as a representation of the larger group $KZ$ by decreeing that $\left( \begin{array}{cc} \pi & 0 \\ 0 & \pi \end{array} \right)$ acts trivially.

If $H \subset G$ is any open subgroup, and $\tau$ is an $\Fpbar$-representation of $H$, we can consider the compact induction $\ind_H^G \tau$.  A model for this representation is given by the space of functions $f : G \to V_\tau$ that are locally constant, compactly supported modulo $Z$, and satisfy the condition $f(hg) = \tau(h) \cdot f(g)$ for every $h \in H$ and $g \in G$.  Here $V_\tau$ is the underlying $\Fpbar$-vector space of $\tau$.  The action of $G$ is given by $(gf)(x) = f(xg)$ for all $g, x \in G$.  Note that if $H$ is a subgroup of finite index, then local constancy and compact support are automatic and this is just the usual induction.  The endomorphisms of this compact induction were computed by Barthel and Livn\'{e} \cite{BarthelLivne} for $n = 2$.  For $n \geq 2$, see \cite{Scheingln} for an explicit computation and \cite{FlorianSatake} for a more conceptual argument on the level of algebraic groups.

\begin{proposition} \label{heckestructure}
Let $\sigma$ be an irreducible $\Fpbar$-representation of $K$.  The endomorphism algebra $\mathrm{End}_{G}(\ind_{KZ}^G \sigma)$ is equal to a polynomial ring $\Fpbar[T_1, \dots, T_{n-1}]$, where the $T_i$ are explicitly defined endomorphisms.
\end{proposition}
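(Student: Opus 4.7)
The plan is to use Frobenius reciprocity and the Cartan decomposition to reduce the computation of $\End_G(\ind_{KZ}^G \sigma)$ to a family of Hom spaces indexed by dominant cocharacters modulo the center, prove that each such space is one-dimensional, and then identify the resulting algebra with the monoid algebra of the free commutative monoid on the fundamental cocharacters $\mu_1, \ldots, \mu_{n-1}$.

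First, I would invoke Frobenius reciprocity to identify
$$\End_G(\ind_{KZ}^G \sigma) \simeq \Hom_{KZ}(\sigma, \ind_{KZ}^G \sigma),$$
and realize the right-hand side concretely as the space of functions $\Phi: G \to \End_{\Fpbar}(V_\sigma)$ that are compactly supported modulo $Z$ and satisfy $\Phi(k_1 g k_2) = \sigma(k_1) \Phi(g) \sigma(k_2)$ for $k_i \in KZ$, with algebra structure given by convolution. The Cartan decomposition then writes $G$ as a disjoint union of double cosets $KZ \cdot \mu(\pi) \cdot KZ$, indexed by dominant cocharacters of the diagonal torus modulo the center, that is by tuples $(a_1 \geq a_2 \geq \cdots \geq a_{n-1} \geq 0)$, with representative $\mu(\pi) = \diag(\pi^{a_1}, \ldots, \pi^{a_{n-1}}, 1)$. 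The subspace of functions $\Phi$ supported on one such double coset is naturally isomorphic to $\Hom_{KZ \cap \mu(\pi) KZ \mu(\pi)^{-1}}(\sigma^{\mu(\pi)}, \sigma)$.

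Second, I would show that each of these Hom spaces has dimension exactly one. By Proposition \ref{basicfact}, $\sigma$ factors through the reduction $K \to \GL_n(k_F)$. The intersection $K \cap \mu(\pi) K \mu(\pi)^{-1}$ is a subgroup whose reduction modulo $\pi$ is a standard parabolic $P_\mu \subseteq \GL_n(k_F)$, whose Levi $\prod_j \GL_{m_j}(k_F)$ is dictated by the multiplicities of the equal $a_i$'s, while the conjugate $\sigma^{\mu(\pi)}$ makes the opposite unipotent radical act through its reduction. The question is therefore whether a given irreducible $\Fpbar$-representation of $\GL_n(k_F)$ admits, up to scalar, a unique map realizing this compatibility. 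This is answered affirmatively by the fundamental fact that any irreducible mod $p$ representation of $\GL_n(k_F)$ contains a unique line fixed by the unipotent radical of the standard Borel, on which the diagonal torus acts by the highest weight of $\sigma$; this line provides a canonical nonzero Hecke operator for each dominant $\mu$, and simultaneously bounds the Hom space above by one.

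Third, I would define $T_i$ to be the canonical Hecke operator so obtained for the fundamental cocharacter $\mu_i$ with $\mu_i(\pi) = \diag(\pi, \ldots, \pi, 1, \ldots, 1)$ (with $i$ copies of $\pi$), and verify by a direct convolution calculation that the support of $T_1^{a_1} \cdots T_{n-1}^{a_{n-1}}$ is contained in the closure, under the dominance order, of the double coset of $\mu(\pi) = \left(\sum_i a_i \mu_i\right)(\pi)$, with nonzero leading contribution in that top double coset. Since the monoid of dominant cocharacters modulo the center is the free commutative monoid on $\mu_1, \ldots, \mu_{n-1}$, an induction on the dominance order shows that these monomials form a basis of $\End_G(\ind_{KZ}^G \sigma)$, yielding the polynomial ring $\Fpbar[T_1, \ldots, T_{n-1}]$.

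The main obstacle lies in the convolution step: one must check that $T_i$ and $T_j$ commute and that the leading term of $T_1^{a_1} \cdots T_{n-1}^{a_{n-1}}$ does not vanish, despite the many cancellations that are possible in characteristic $p$. What rescues the argument is precisely a feature that is special to the mod $p$ setting: since $\sigma$ has a one-dimensional space of unipotent-invariants, the natural map from $\End_G(\ind_{KZ}^G \sigma)$ to the Hecke algebra of the diagonal torus (the mod $p$ Satake transform of \cite{FlorianSatake}) is injective and identifies the former with the dominant submonoid algebra of $\Fpbar[X_*(T)/X_*(Z)]$. Matching this dominant submonoid algebra with the free polynomial algebra on the $\mu_i$ finishes the proof; the concrete computation that makes this identification explicit is carried out in \cite{Scheingln}.
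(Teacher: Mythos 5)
The paper does not actually prove this proposition: it states it and defers to \cite{BarthelLivne} for $n=2$ and to \cite{Scheingln} and \cite{FlorianSatake} for general $n$. Your outline reconstructs essentially the argument of those references --- Frobenius reciprocity plus the Cartan decomposition to reduce to one intertwining space per double coset, one-dimensionality of each such space, and a Satake-type triangularity argument to show the monomials in the $T_i$ form a basis --- so there is no divergence of method to report. One step in your sketch is slightly too quick: for a non-regular dominant cocharacter $\mu$ the group $K \cap \mu(\pi)K\mu(\pi)^{-1}$ reduces to a proper standard parabolic $P_\mu$ rather than the Borel, and to bound $\Hom_{K \cap \mu(\pi)K\mu(\pi)^{-1}}(\sigma^{\mu(\pi)},\sigma)$ by one you need more than the uniqueness of the highest-weight line; you need Smith's theorem that the invariants $\sigma^{N_{P_\mu}}$ (and the coinvariants under the opposite radical) form an \emph{irreducible} module over the Levi, after which Schur's lemma gives the bound and the highest-weight line gives the nonzero element. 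You also correctly identify the only genuinely delicate point --- possible vanishing of leading convolution coefficients in characteristic $p$ --- and the standard remedy, namely injectivity of the mod $p$ Satake transform onto the dominant monoid algebra, which is exactly how \cite{FlorianSatake} closes the argument.
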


Let $W$ be an irreducible $\Fpbar$-representation of $G$ with central character, i.e. such that the elements of $Z$ act by scalars.  Twisting by an unramified character, we may assume that $\left( \begin{array}{cc} \pi & 0 \\ 0 & \pi \end{array} \right)$ acts trivially.  If $\sigma \subset W_{| K}$ is a $K$-submodule of $G$, then by Frobenius reciprocity we obtain a non-zero homomorphism $\ind_{KZ}^G \sigma \to W$ of $G$-modules, which must be a surjection by the irreducibility of $W$.  We say that $W$ is {\emph{admissible}} if the space of invariants $W^U = \{ w \in W: \forall u \in U, uw = w \}$ is finite-dimensional for any open subgroup $U \subset G$; since $W$ is an $\Fpbar$-representation this is in fact equivalent to $W^{I(1)}$ being finite-dimensional.

The endomorphism algebra $\mathrm{End}_G(\ind_{KZ}^G \sigma)$ is commutative by Proposition \ref{heckestructure}, and it acts on $\mathrm{Hom}_G(\ind_{KZ}^G \sigma, W)$ in the obvious way.  If $W$ is assumed to be admissible, then $\mathrm{Hom}_G(\ind_{KZ}^G \sigma, W) \simeq \mathrm{Hom}_{KZ}(\sigma, W_{| KZ})$ is finite-dimensional (because $\sigma$ must contain a non-zero $I(1)$-invariant, which must map to an element of $W^{I(1)}$) and necessarily contains an eigenvector for the $\mathrm{End}_G(\ind_{KZ}^G \sigma)$-action.  We obtain the following result.

\begin{proposition}
Let $W$ be a smooth irreducible $\Fpbar[G]$-module with central character.  Assume that $W$ is admissible if $n \geq 3$.  Let $\sigma$ be an irreducible $\Fpbar[K]$-module $\sigma$ such that $\sigma \subset W_{| K}$.  Then there exist an unramified character $\chi: F^\ast \to \Fpbar^\ast$ and scalars $\lambda_1, \dots, \lambda_{n-1} \in \Fpbar$ such that there exists a surjection of $G$-modules
\begin{equation}
(\chi \circ \det) \tensor \ind_{KZ}^{G} \sigma / (T_1 - \lambda_1, \dots, T_{n-1} - \lambda_{n-1})\ind_{KZ}^G \sigma \twoheadrightarrow W.
\end{equation}
\end{proposition}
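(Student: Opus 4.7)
The plan is to follow the strategy sketched immediately before the statement: normalize the central character of $W$ by an unramified twist, apply Frobenius reciprocity to produce a surjection out of $\ind_{KZ}^G \sigma$, and then extract a simultaneous Hecke eigenvector in the resulting Hom space.

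First, I would handle the central character. Let $\omega : F^\ast \to \Fpbar^\ast$ be the central character of $W$, and pick the unramified character $\chi : F^\ast \to \Fpbar^\ast$ with $\chi(\pi)^n = \omega(\pi)$, using that $\Fpbar$ is algebraically closed. Twisting $W$ by $(\chi \circ \det)^{-1}$ does not alter $W_{|K}$, since $\chi$ is trivial on $\mathcal{O}_F^\ast$ and $\det(K) = \mathcal{O}_F^\ast$; hence $\sigma$ remains a $K$-submodule of the twist, while the scalar matrix $\pi I$ now acts trivially on the twisted representation. This reduces the problem to the case where $\sigma$ sits inside $W_{|KZ}$ as a $KZ$-submodule, compatibly with the convention that $\pi I$ acts trivially on $\sigma$.

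Next, by Frobenius reciprocity for compact induction, $\Hom_G(\ind_{KZ}^G \sigma, W) \simeq \Hom_{KZ}(\sigma, W_{|KZ}) \neq 0$, and any nonzero element is surjective since $W$ is irreducible. By Proposition \ref{heckestructure}, the commutative ring $\End_G(\ind_{KZ}^G \sigma) = \Fpbar[T_1,\dots,T_{n-1}]$ acts on this Hom space by precomposition. Using Proposition \ref{basicfact}, I would pick a nonzero $I(1)$-invariant $v \in \sigma^{I(1)}$; by irreducibility of $\sigma$, this $v$ generates $\sigma$ as a $KZ$-module, so evaluation at $v$ embeds $\Hom_{KZ}(\sigma, W_{|KZ}) \hookrightarrow W^{I(1)}$. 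When $n \geq 3$, admissibility of $W$ makes $W^{I(1)}$ finite-dimensional; when $n = 2$, one appeals to the Barthel--Livn\'e analysis of smooth irreducible mod $p$ representations of $\GL_2(F)$. In either case the Hom space is a nonzero finite-dimensional module over the polynomial algebra $\Fpbar[T_1,\dots,T_{n-1}]$ over an algebraically closed field, and therefore contains a simultaneous eigenvector $\varphi$ with some eigenvalues $\lambda_1,\dots,\lambda_{n-1}$.

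To conclude, $\varphi$ is a surjection $\ind_{KZ}^G \sigma \twoheadrightarrow W$ that kills $(T_i - \lambda_i)\ind_{KZ}^G \sigma$ for every $i$, and hence factors through the quotient appearing in the statement. Undoing the initial twist by tensoring with $\chi \circ \det$ then yields the asserted surjection. The delicate point is the finite-dimensionality of the Hom space in the $n = 2$ case, where admissibility is deliberately not assumed; I expect this to be the main obstacle, and would rely on the Barthel--Livn\'e classification of smooth irreducible mod $p$ representations of $\GL_2(F)$ to supply the needed Hecke eigenvalue.
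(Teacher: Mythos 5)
Your proposal is correct and follows essentially the same route as the paper: an unramified twist to normalize the action of $\pi$, Frobenius reciprocity, finite-dimensionality of the Hom space via $I(1)$-invariants when $W$ is admissible, and an appeal to Barthel--Livn\'{e} (Theorems 32 and 33, which exploit that $\End_G(\ind_{KZ}^G\sigma)$ has Krull dimension $1$) to dispense with admissibility when $n=2$. You also correctly identified the non-admissible $n=2$ case as the one genuine subtlety, which is exactly where the paper defers to the cited reference.
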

\begin{proof}
If $W$ is admissible, then we have sketched out the proof.  If $n = 2$, then Barthel and Livn\'{e} (see Theorems 32 and 33 of \cite{BarthelLivne}) obtain this result without assuming admissibility of $W$ by using the fact that $\mathrm{End}_G(\ind_{KZ}^G \sigma)$ has Krull dimension 1.
\end{proof}

For the rest of this section, suppose that $n = 2$.  In this case, the endomorphism algebra $\mathrm{End}_G(\ind_{KZ}^G \sigma)$ has a single generator $T_1$, which we will call $T$.  Up to unramified twist, we know that every irreducible $\Fpbar[G]$-module with central character is a quotient of $\ind_{KZ}^G \sigma / (T - \lambda)(\ind_{KZ}^G \sigma)$ for some $\sigma$ and some $\lambda \in \Fpbar$.  We say that an irreducible $W$ as above is {\emph{supersingular}} if it is a quotient of some $\ind_{KZ}^G \sigma / (T - \lambda)(\ind_{KZ}^G \sigma)$.  Barthel and Livn\'{e} proved a partial classification of the irreducible $\Fpbar[G]$-modules with central character as follows.  Note that if $G = \GL_2(\Q_p)$, then Berger \cite{Berger} recently showed that all irreducible $\Fpbar[G]$-modules have central character, but this is not known even for $G = \GL_2(F)$ whenever $F \neq \Q_p$.

\begin{theorem}[Barthel-Livn\'{e}]
Let $\sigma$ be an irreducible $\Fpbar[K]$-module.
\begin{enumerate}
\item If $\sigma$ has dimension other than $1$ or $p^f$ (the minimal and maximal dimensions possible) or if $\lambda \neq \pm 1$, then $\ind_{KZ}^G \sigma / (T - \lambda)(\ind_{KZ}^G \sigma)$ is irreducible and is isomorphic to the parabolic induction of a character from the upper triangular Borel subgroup $B \subset G$.
\item The induction $\ind_B^G \mathbf{1}$, where $\mathbf{1}$ is the trivial character of $B$, has length two.  Its subquotients are a one-dimensional representation $\det$ and an infinite-dimensional analogue of the Steinberg representation, denoted $\mathrm{St}$.
\item Up to unramified twist, every smooth irreducible $\Fpbar[G]$-module $W$ with central character satisfies exactly one of the following statements:
\begin{enumerate}
\item $W \simeq \ind_{KZ}^G \sigma / (T - \lambda)(\ind_{KZ}^G \sigma)$, where $\sigma$ has dimension other than $1$ or $p^f$, or $\lambda \neq \pm 1$.
\item $W \simeq \chi \circ \det$ for some smooth character $\chi: F^\ast \to \Fpbar^\ast$.
\item $W \simeq (\chi \circ \det) \tensor \mathrm{St}$ for some smooth character $\chi: F^\ast \to \Fpbar^\ast$.
\item $W$ is supersingular.
\end{enumerate}
\end{enumerate}
\end{theorem}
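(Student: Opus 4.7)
The plan is to combine the preceding proposition, which exhibits every smooth irreducible $\Fpbar[G]$-module with central character as a quotient of some $\ind_{KZ}^G \sigma / (T - \lambda)\ind_{KZ}^G \sigma$, with a direct analysis of these universal modules via principal series. Write $B \subset G$ for the upper-triangular Borel and fix a smooth character $\chi : B \to \Fpbar^\ast$, and set $q = p^f = |k_F|$.

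First I would study the restriction of the parabolic induction $\ind_B^G \chi$ to $K$. By the Iwasawa decomposition $G = B \cdot K$ one has $(\ind_B^G \chi)_{|K} \simeq \ind_{B \cap K}^K (\chi_{|B \cap K})$, an inflated principal series of $\GL_2(k_F)$. Its structure is classical: for $\chi$ not fixed by the Weyl reflection $w$ it is irreducible of dimension $q+1$, while for Weyl-fixed $\chi$ it has length two with a one-dimensional sub and a $q$-dimensional Steinberg quotient. Via Frobenius reciprocity this identifies at most two irreducible $K$-types $\sigma$ with $\Hom_{KZ}(\sigma, \ind_B^G \chi) \neq 0$, and for each of them I would compute the eigenvalue of the Hecke operator $T$ on the corresponding one-dimensional $\Hom$-space using its explicit double-coset formula from Proposition \ref{heckestructure}. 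The outcome is an eigenvalue $\lambda(\sigma,\chi) \in \Fpbar$ explicit in terms of $\chi(\diag(\pi,1))$ and $\chi(\diag(1,\pi))$; in the Weyl-fixed case it comes out to $\pm 1$.

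Running this construction in reverse, for each compatible triple $(\sigma,\lambda,\chi)$ I obtain a non-zero $G$-map
\[
\ind_{KZ}^G \sigma / (T - \lambda)\ind_{KZ}^G \sigma \twoheadrightarrow \ind_B^G \chi.
\]
The core of the argument is then to show that in the \emph{generic} range (that is, $\dim \sigma \notin \{1, q\}$, or $\lambda \neq \pm 1$) the target is irreducible and the surjection is an isomorphism. Irreducibility follows from a Mackey-style computation of $\End_G(\ind_B^G \chi)$, which is one-dimensional whenever $\chi \neq \chi^w$. For the isomorphism, any proper $G$-subquotient of the source must, by the preceding proposition, be a quotient of some $\ind_{KZ}^G \sigma' / (T - \lambda')$; matching of $K$-socles with $T$-eigenvalues forces $(\sigma',\lambda')=(\sigma,\lambda)$, contradicting irreducibility of the target. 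This proves (1). For Weyl-fixed $\chi$ the analysis reduces after unramified twist to $\chi = \mathbf{1}$; exhibiting the constant functions in $\ind_B^G \mathbf{1}$ as a $G$-stable line gives the character $\det$ as a submodule, and one defines $\mathrm{St}$ as the quotient, proving (2).

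Part (3) then assembles itself. Any smooth irreducible $W$ with central character is, up to unramified twist, a quotient of some $\ind_{KZ}^G \sigma/(T-\lambda)\ind_{KZ}^G \sigma$. If $(\sigma,\lambda)$ is generic, the source is already irreducible by (1), and $W$ falls into case (a). If $(\sigma,\lambda)$ is degenerate and $W$ is isomorphic to a non-supersingular constituent of the source, then by (2) it is either $\chi \circ \det$ or $(\chi \circ \det) \tensor \mathrm{St}$, giving (b) or (c); otherwise $W$ is supersingular, which is (d). Disjointness of the four cases is read off from central characters, dimensions, and $K$-socles. The main obstacle I foresee is the Hecke-eigenvalue computation in the second step: obtaining the bijection $(\sigma,\lambda) \leftrightarrow \chi$ on the generic range and verifying that $\lambda = \pm 1$ is forced precisely when $\chi$ is Weyl-invariant requires a careful direct calculation of $T$ on Bruhat-cell representatives, and it is this computation which encodes the boundary numerology ``$\dim \sigma \in \{1, p^f\}$'' and ``$\lambda = \pm 1$'' responsible for the clean statement of the classification.
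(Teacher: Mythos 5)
The paper does not actually prove this theorem; it is imported wholesale from \cite{BarthelLivne}, so there is no internal proof to compare against. Your outline does reproduce the genuine architecture of Barthel--Livn\'{e}'s argument (compare $\ind_{KZ}^G \sigma/(T-\lambda)$ with an unramified principal series via Frobenius reciprocity and an explicit Hecke eigenvalue computation), but the two load-bearing steps are exactly where your proposal substitutes formal arguments that do not close. First, ``$\End_G(\ind_B^G\chi)$ is one-dimensional'' does not imply irreducibility: a non-split length-two module with non-isomorphic constituents can perfectly well have scalar endomorphisms, so the Mackey computation proves nothing about irreducibility of the generic principal series. Second, and more seriously, your argument that the surjection $\ind_{KZ}^G\sigma/(T-\lambda)\twoheadrightarrow \ind_B^G\chi$ is injective does not work. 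A proper nonzero submodule of the source need not be irreducible, and in this non-admissible characteristic-$p$ setting you cannot extract an irreducible submodule from it (there is no Artinian or Zorn-type argument available), so the classification proposition for \emph{irreducible} quotients of compact inductions does not apply to it. Even granting an irreducible submodule $U$ with a $\sigma'$ in its $K$-socle and $T$-eigenvalue $\lambda'$, concluding $(\sigma',\lambda')=(\sigma,\lambda)$ yields no contradiction: it is consistent with $U$ being a proper submodule unless you already know that $\sigma$ occurs with multiplicity one in the $K$-socle of $\ind_{KZ}^G\sigma/(T-\lambda)$ --- and note that $\Hom_K(\sigma,\ind_{KZ}^G\sigma)\simeq\Fpbar[T]$ is infinite-dimensional, so this multiplicity-one statement is itself a nontrivial part of what must be proved. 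The actual proof in \cite{BarthelLivne} closes this gap by explicit computations on the Bruhat--Tits tree: one shows by descending induction on the radius of the support that any function in the kernel of the comparison map lies in $(T-\lambda)\ind_{KZ}^G\sigma$. That support-shrinking computation is the technical heart of parts (1) and (2) and is absent from your sketch.

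A smaller but genuine issue is the case $\lambda=0$. Your Hecke eigenvalue computation will show that the eigenvalue of $T$ on the $\sigma$-isotypic line of $\ind_B^G\chi$ is (a normalization of) $\chi(\diag(\pi,1))$ or $\chi(\diag(1,\pi))$, hence never zero; so the quotients $\ind_{KZ}^G\sigma/T(\ind_{KZ}^G\sigma)$ are \emph{not} principal series, even though $\lambda=0$ satisfies ``$\lambda\neq\pm1$'' as literally written in part (1). (This is really an imprecision in the statement as quoted --- the correct hypothesis is $\lambda\neq 0,\pm1$, or else $\dim\sigma\notin\{1,p^f\}$ together with $\lambda \ne 0$ --- but your proposed bijection $(\sigma,\lambda)\leftrightarrow\chi$ ``on the generic range'' silently breaks there, and the trichotomy in part (3) needs the $\lambda=0$ quotients routed into the supersingular case (d), not case (a).) Finally, for the ``exactly one'' in part (3), distinguishing the supersingular class from the others requires knowing that the $T$-eigenvalues occurring in $\Hom_K(\sigma, W)$ are an invariant of $W$ and that $0$ cannot coexist with the principal-series eigenvalues; ``central characters, dimensions, and $K$-socles'' alone do not deliver this.
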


\begin{remark}
The previous theorem classifies all non-supersingular (smooth, with central character) $\Fpbar$-representations of $\GL_2(F)$ for arbitrary finite extensions $F / \Q_p$.  Herzig \cite{HerzigParabInd} proved a generalization of this theorem of $\GL_n(F)$ for $n > 2$, in which all smooth admissible representations of $\GL_n(F)$ with central character are classified in terms of the supersingular representations of $\GL_m(F)$ for $m \leq n$.  A representation of $\GL_n(F)$ is called supersingular if it is a quotient of $\ind_{KZ}^G \sigma / (T_1, \dots, T_{n-1})$.  Abe \cite{Abe} further generalized this result to a wider class of reductive groups.
\end{remark}

Let $L$ be a number field and $v$ a place of $L$ such that $L_v \simeq F$.  If $\rho: G_F \to \GL_2(\Fpbar)$ is an irreducible local Galois representation, let $\tilde{\rho}: G_L \to \GL_2(\Fpbar)$ be a global representation such that $\tilde{\rho}_{| G_v} \simeq \rho$.  Recall that in Conjecture \ref{strongserre} we defined a set $W_v(\tilde{\rho})$ of local Serre weights at $v$, which in fact depends only on $\rho$.  Thus we can speak of a set $W(\rho)$ of modular local Serre weights.

Now suppose that $G = \GL_2(\Q_p)$.  In this case, the irreducible $\Fpbar[K]$-modules have the form $\sigma = \det^w \tensor \Sym^r \Fpbar^2$ with $0 \leq w \leq p -2$ and $0 \leq r \leq p - 1$.  We define an involution on the set of these local Serre weights as follows.  For $\sigma$ as above, define $\sigma^\prime = \det^{w + r} \tensor \Sym^{p - 1 - r} \Fpbar^2$.  Note that $(\sigma^\prime)^\prime = \sigma$.  It is easy to compute from the statement of Conjecture \ref{strongserre} that if $\rho: G_{\Q_p} \to \GL_2(\Fpbar)$ is irreducible, then $W(\rho)$ is necessarily of the form $W(\rho) = \{ \sigma, \sigma^\prime \}$.

Breuil \cite{Breuil} completed the classification of the irreducible $\Fpbar$-representations of $\GL_2(\Q_p)$ with the following result.

\begin{theorem} [Breuil]
Let $G = \GL_2(\Q_p)$.  Then for every local Serre weight $\sigma$, the $G$-module $\ind_{KZ}^G \sigma / T(\ind_{KZ}^G \sigma)$ is irreducible.  Moreover, for every $\sigma$ we have
\begin{equation} \label{intertwine}
\ind_{KZ}^G \sigma / T(\ind_{KZ}^G \sigma) \simeq \ind_{KZ}^G \sigma^\prime / T(\ind_{KZ}^G \sigma^\prime)
\end{equation}
and these are the only isomorphisms among supersingular $\Fpbar[G]$-modules.
\end{theorem}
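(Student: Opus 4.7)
The plan is to analyze $\pi(\sigma) := \ind_{KZ}^G \sigma / T(\ind_{KZ}^G \sigma)$ through its pro-$p$-Iwahori invariants $\pi(\sigma)^{I(1)}$, which carry an action of the pro-$p$-Iwahori--Hecke algebra $\mathcal{H}$, combined with the combinatorial structure of the Bruhat--Tits tree $\mathcal{T}$ of $\GL_2(\Q_p)$.  The overall strategy is to pin down $\pi(\sigma)^{I(1)}$ as a specific two-dimensional $\mathcal{H}$-module, then use this to deduce irreducibility, the intertwining $\pi(\sigma) \simeq \pi(\sigma^\prime)$, and its uniqueness.

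First I would realize $\ind_{KZ}^G \sigma$ as $\sigma$-valued functions on the vertex set $G/KZ$ of $\mathcal{T}$ (compactly supported modulo $Z$), so that $T$ becomes a canonical sum-over-neighbors operator built from the injection $\sigma^{I(1) \cap K} \hookrightarrow \sigma$.  Since $\sigma \simeq \det\nolimits^w \tensor \Sym^r \Fpbar^2$ with $0 \leq w \leq p-2$ and $0 \leq r \leq p-1$, the line $\sigma^{I(1) \cap K}$ is acted on by $I/I(1) \simeq (\F_p^\ast)^2$ through an explicit character $\chi_\sigma$ computable from $(w,r)$.  A direct check gives $\chi_{\sigma^\prime} = s \cdot \chi_\sigma$, where $s$ denotes the non-trivial Weyl element; this Weyl-compatibility is the combinatorial shadow of the eventual intertwining.

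Second I would compute $(\ind_{KZ}^G \sigma)^{I(1)}$ orbit-by-orbit on $\mathcal{T}$ and then quotient by the image of $T$.  The crucial technical assertion is that $\pi(\sigma)^{I(1)}$ is \emph{exactly} two-dimensional, spanned by $I$-eigenvectors for the characters $\chi_\sigma$ and $\chi_{\sigma^\prime}$, and that the normaliser of $I$ (generated by $\left(\begin{array}{cc} 0 & 1 \\ p & 0 \end{array}\right)$) swaps these two lines.  Given this, irreducibility of $\pi(\sigma)$ follows: any non-zero $G$-subrepresentation $W$ has $W^{I(1)} \neq 0$ by Proposition \ref{basicfact}, so $W^{I(1)}$ meets $\pi(\sigma)^{I(1)}$ and, by the swap action of the normaliser, equals it; walking along $\mathcal{T}$ and using the defining relation $T = 0$ to propagate values from each vertex to its neighbors then shows that the $G$-module generated by $\pi(\sigma)^{I(1)}$ exhausts $\pi(\sigma)$.

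Third, the intertwining $\pi(\sigma) \simeq \pi(\sigma^\prime)$ is produced by Frobenius reciprocity: the $\chi_{\sigma^\prime}$-eigenline in $\pi(\sigma)^{I(1)}$ extends to a $K$-embedding $\sigma^\prime \hookrightarrow \pi(\sigma)_{| K}$, yielding a non-zero $G$-map $\ind_{KZ}^G \sigma^\prime \to \pi(\sigma)$; one checks that $T$ acts by zero on its image, so it factors through $\pi(\sigma^\prime)$, and irreducibility forces an isomorphism.  For the uniqueness statement, the unordered pair of characters $\{\chi_\sigma, \chi_{\sigma^\prime}\}$ appearing on $I(1)$-invariants recovers the unordered pair $\{\sigma, \sigma^\prime\}$ (since $(w,r) \mapsto \chi_\sigma$ is two-to-one with fibres exactly $\{\sigma, \sigma^\prime\}$), so no further coincidences are possible.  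The main obstacle is the explicit computation that $\pi(\sigma)^{I(1)}$ has dimension exactly two: this is an intricate combinatorial accounting on $\mathcal{T}$ that is peculiar to $F = \Q_p$, and is precisely where all the arithmetic content of the theorem is concentrated.
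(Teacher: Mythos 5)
Your proposal follows essentially the same route as the paper's (admittedly sketchy) proof: compute that the $I(1)$-invariants of $W = \ind_{KZ}^G \sigma / T(\ind_{KZ}^G \sigma)$ are two-dimensional, use Proposition \ref{basicfact} to see that any non-zero submodule has non-zero $I(1)$-invariants, deduce generation by propagating along the tree via the relation $T = 0$, construct the intertwiner $\pi(\sigma') \to \pi(\sigma)$ explicitly by Frobenius reciprocity from the second line of invariants, and rule out further isomorphisms by an invariant of $W$ (the paper uses $\soc_K W \simeq \sigma \oplus \sigma'$; you use the unordered pair of $I/I(1)$-characters on $W^{I(1)}$, which is the same data, namely the highest-weight characters of the socle constituents).

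There is one genuine weak point in your irreducibility step. You argue that a non-zero $G$-submodule $U$ has $U^{I(1)} \neq 0$, and that the normaliser element $\Pi = \left( \begin{array}{cc} 0 & 1 \\ p & 0 \end{array} \right)$ swaps the two $I$-eigenlines, forcing $U^{I(1)}$ to be all of $W^{I(1)}$. This works only when $\chi_{\sigma} \neq \chi_{\sigma'}$. But $\chi_{\sigma'} = s\cdot\chi_\sigma$ equals $\chi_\sigma$ exactly when $r \in \{0, p-1\}$ (e.g.\ $\sigma$ trivial, $\sigma' = \Sym^{p-1}\Fpbar^2$, a pair that does occur for supersingular representations): then $I$ acts on the two-dimensional space $W^{I(1)}$ by a single character, every line is $I$-stable, and since $\Pi$ acts as an involution in odd characteristic it has stable eigenlines. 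So the swap argument cannot exclude $\dim U^{I(1)} = 1$ in this case. The fix is what the paper records and what Breuil and Barthel--Livn\'{e} actually prove: the stronger statement that \emph{every} non-zero vector of $W^{I(1)}$ generates $W$ as a $G$-module, not merely that the full two-dimensional space does. Your tree-propagation argument should therefore be run starting from an arbitrary non-zero invariant vector (a general linear combination of the two eigenvectors), which is where the extra combinatorial work specific to $F = \Q_p$ lies. With that adjustment the argument is complete and matches the paper's.
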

\begin{remark}
Note that the two operators $T$ appearing in (\ref{intertwine}) are different objects.  The $T$ on the left-hand side is the generator of the endomorphism algebra of $\ind_{KZ}^G \sigma$, while the one on the right-hand side generates the endomorphism algebra of $\ind_{KZ}^G \sigma^\prime$.
\end{remark}
\begin{proof}
Let $W = \ind_{KZ}^G \sigma / T(\ind_{KZ}^G \sigma)$ and let $U \subset W$ be an irreducible $G$-submodule.  By explicit computation, one shows that $W^{I(1)}$ is two-dimensional and that every non-zero element of $W^{I(1)}$ generates $W$ as a $G$-module.  But $U^{I(1)} \neq 0$ by Proposition \ref{basicfact} and hence $U = W$.  The isomorphisms of (\ref{intertwine}) are constructed explicitly, and one shows that
\begin{equation} \label{socle}
 \mathrm{soc}_K (\ind_{KZ}^G \sigma / T(\ind_{KZ}^G \sigma)) \simeq \sigma \oplus \sigma^\prime,
 \end{equation}
implying that there are no other isomorphisms.  Recall that for a $G$-module $M$, the socle $\mathrm{soc}_K (M)$ is the direct sum of all irreducible $K$-submodules of $M$.
\end{proof}

If $\rho: G_{\Q_p} \to \GL_2(\Fpbar)$ is an irreducible local Galois representation, define an $\Fpbar$-representation of $\GL_2(\Q_p)$ by $\pi(\rho) = \ind_{KZ}^G \sigma / T(\ind_{KZ}^G \sigma)$, where $\sigma \in W(\rho)$.  It is immediate from the results just presented that $\pi(\rho)$ is well-defined and that this construction provides a bijection between irreducible Galois representations $\rho: G_{\Q_p} \to \GL_2(\Fpbar)$ and supersingular representations of $\GL_2(\Q_p)$.  Note that the following relation is satisfied:
\begin{equation} \label{soclerel}
\mathrm{soc}_K (\pi(\rho)) = \bigoplus_{\sigma \in W(\rho)} \sigma.
\end{equation}

In the same paper \cite{Breuil}, Breuil constructed $\pi(\rho)$ for semisimple reducible $\rho$, and eventually Colmez defined $\pi(\rho)$ for indecomposable $\rho$, thereby completing the mod $p$ local Langlands correspondence for $\GL_2(\Q_p)$.  These constructions are more complicated than the one presented above, and we will not give them here, nor shall we argue why these definitions of $\pi(\rho)$ are the ``correct'' ones.  However, it is important to note that the property (\ref{soclerel}) remains true for all $\rho$.

If $F \neq \Q_p$, then almost nothing is known about the mod $p$ local Langlands correspondence for $\GL_2(F)$, and the statements that are known are almost all negative.  For instance, we know that  there cannot be a bijection between irreducible Galois representations $\rho: G_F \to \GL_2(\Fpbar)$ and supersingular representations of $\GL_2(F)$ because there are far too many of the latter.  The condition (\ref{soclerel}) does not isolate a $\pi(\rho)$ because, for unramified extensions $F / \Q_p$, Breuil and Paskunas \cite{BreuilPaskunas} have proved the existence of infinite families of supersingular representations $W$ satisfying $\mathrm{soc}_K(W) \simeq \bigoplus_{\sigma \in W(\rho)} \sigma$.  Moreover, although it is immediate from Zorn's Lemma that supersingular representations of $\GL_2(F)$ exist, we do not have a single explicit construction of one; the proof of Breuil and Paskunas uses the theory of diagrams and involves the taking of injective envelopes, which makes their work very non-explicit.  In fact, Schraen \cite{Schraen} has shown that if $F / \Q_p$ is quadratic, then no supersingular representation of $\GL_2(F)$ is finitely presented.  This makes it difficult to contemplate generalizations of Colmez's construction.

Let $e$ be the ramification index of $F / \Q_p$, and recall that the residue field $k$ of $F$ satisfies $[k : \F_p] = f$.  Let $F_0$ be the maximal unramified subextension of $F / \Q_p$, and observe that, since $F$ and $F_0$ have the same residue field, the Serre weights for $F_0$ are the same as those for $F$.  If $\rho: G_F \to \GL_2(\Fpbar)$ is irreducible, then (see \cite{cuspidalpaper}) one defines $e^f$ irreducible representations $\rho_1, \dots, \rho_{e^f} : G_{F_0} \to \GL_2(\Fpbar)$ such that one expects
$$ \mathrm{soc}_K (\pi(\rho)) \simeq \bigoplus_{i = 1}^{e^f} \bigoplus_{\sigma \in W(\rho_i)} \sigma.$$

The formula above explains the multiplicities of the different constituents of the $K$-socle of $\pi(\rho)$.  Ongoing work of Breuil and Diamond aims to specify the $K$-socles of $\pi(\rho)$ for reducible $\rho$.  This section has only scratched the surface of the mod $p$ local Langlands correspondence and has said almost nothing about current research, but we hope that it has sufficiently piqued the reader's interest to consult the literature for more details about the field.

\section{Potential modularity and compatible systems}
After the digression about mod $p$ local Langlands in the previous section, we return to our discussion of Serre's modularity conjecture.  In particular, we return to the notation of Section \ref{statement}, so that $F$ is now again a totally real number field.

\subsection{A wish list} \label{wishlist}
Suppose that we have two mod $p$ Galois representations $\overline{\rho}_1: G_F \to \GL_2(\overline{\F}_{p_1})$ and $\overline{\rho}_2: G_F \to \GL_2(\overline{\F}_{p_2})$, where $p_1$ and $p_2$ are two primes, possibly distinct.  It clearly would be useful to be able to prove statements of the form ``if $\overline{\rho}_1$ is modular and certain conditions are satisfied, then $\overline{\rho}_2$ is modular as well.''  Such theorems would allow us to leverage knowledge of Serre's conjecture in some special cases to prove it for larger classes of Galois representations.

How can we relate the modularity of two different Galois representations?  A crucial idea is to think about $p$-adic Galois representations, and to recall that when we first encountered them, in Prof. B\"{o}ckle's lectures, they were constructed in families.  Indeed, for a modular form $f$, we obtained a representation $\rho_{f, l} : G_\Q \to \GL_2(\overline{\Q}_l)$ for each prime $l$.  The $\rho_{f, l}$ for different $l$ were very intimately related.  

An important starting point for work on modularity is an axiomatization of this phenomenon: the notion of weakly and strongly compatible systems that we saw in Prof. B\"{o}ckle's lectures.  A strongly compatible system $\{ \rho_l : G_\Q \to \GL_2(\overline{\Q}_l) \}$ of Galois representations behaves like a family of representations arising from a modular form.  In particular, if one member of the system is modular, then they all are, and the same is true of their reductions.  This gives us a general strategy for proving the ``if $\overline{\rho}_1$ is modular, then $\overline{\rho}_2$ is too'' theorems that we wished for at the beginning of this section.  Suppose we could find a compatible system $\{ \rho_l \}$ such that $\overline{\rho}_1 \simeq \overline{\rho_{p_1}}$ and $\overline{\rho}_2 \simeq \overline{\rho_{p_2}}$.  We are assuming that $\overline{\rho}_1$ is modular.  If we could somehow prove that $\rho_{p_1}$ is modular, the compatible system would allow us to conclude that $\rho_{p_2}$ is modular as well, and hence that $\overline{\rho}_2$ is modular.

Three major ingredients are involved in implementing this strategy.  Starting with a representation $\overline{\rho} : G_\Q \to \GL_2(\Fpbar)$, we have the following {\bf{wish list}}:

\begin{enumerate}
\item Find nice lifts $\rho : G_\Q \to \GL_2(\overline{\Q}_p)$ of $\overline{\rho}$.  ``Nice'' will mean that $\rho$ satisfies hypotheses that make the other parts of the wish list available.
\item Given such a lift $\rho$, embed it in a compatible system $\{ \rho_l \}$ such that $\rho \simeq \rho_p$.
\item Modularity lifting theorems.
\end{enumerate}

\subsection{Potential modularity of mod $p$ Galois representations}
In this section we will sketch a proof of the following theorem of Taylor \cite{TaylorJussieu}.

\begin{proposition} \label{modppotmod}
Let $\overline{\rho} : G_\Q \to \GL_2(k)$ be a continuous, irreducible, odd Galois representation, where $k$ is a finite field of characteristic $p$.  Then there exists a Galois totally real extension $F / \Q$ that is unramified at $p$ and such that $\overline{\rho}_{| G_F}$ is modular.
\end{proposition}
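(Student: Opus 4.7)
The plan is to implement the wish list of Section~\ref{wishlist} in the setting where we are free to pass to a totally real extension, so that the hypotheses of a modularity lifting theorem can be arranged rather than assumed. Concretely, I will produce a totally real Galois extension $F/\Q$ unramified at $p$, together with a strongly compatible system of $G_F$-representations whose mod $p$ member is $\overline{\rho}_{\mid G_F}$ and one of whose mod $l$ members, for a well-chosen auxiliary prime $l \neq p$, is already known to be modular by Langlands--Tunnell.

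\textbf{Step 1: auxiliary prime and mod $l$ target.} Choose $l = 3$ when $p \neq 3$ (and invoke a $3$-$5$ switching trick if $p = 3$), so that $\mathrm{PGL}_2(\F_l)$ is solvable and Langlands--Tunnell applies to any irreducible, totally odd $\overline{\sigma}: G_F \to \GL_2(\overline{\F}_l)$. The aim is to exhibit a $\GL_2$-type abelian variety $A$ over a suitable $F$ whose $p$-adic Tate module lifts $\overline{\rho}_{\mid G_F}$ and whose mod $l$ representation $\overline{\sigma}$ is irreducible, totally odd, and obtained by twisting a convenient globally available candidate so that modularity of $\overline{\sigma}$ is accessible.

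\textbf{Step 2: Moret--Bailly over totally real fields.} Consider the moduli space $X / \Q$ whose points parametrize abelian varieties (a twist of a modular curve, or more generally a Hilbert--Blumenthal moduli space) carrying simultaneously a mod $p$ level structure realizing $\overline{\rho}$ and a mod $l$ level structure realizing the chosen $\overline{\sigma}$. One verifies that $X$ is geometrically irreducible and that $X(\Q_v) \neq \varnothing$ for every $v$ in the relevant ramification set: at $p$ one selects local points giving good reduction of $A$ (this is what permits demanding $F$ unramified at $p$), at $l$ points that recover the chosen $\overline{\sigma}$, and at the archimedean places points that force $F$ to be totally real. Moret--Bailly's theorem, in the refinement used by Taylor, then yields an $F$-point of $X$ with $F/\Q$ totally real, Galois, and unramified at $p$.

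\textbf{Step 3: Langlands--Tunnell plus modularity lifting.} The $F$-point furnishes $A/F$, and hence a strongly compatible system $\{\rho_\lambda\}$ of $G_F$-representations coming from the Tate modules of $A$. By Langlands--Tunnell applied over $F$, the mod $l$ member $\overline{\sigma}_{\mid G_F}$ is modular; an $l$-adic modularity lifting theorem in the Hilbert modular setting (Skinner--Wiles, Fujiwara, Kisin, Gee, \dots) then promotes this to modularity of the $l$-adic member $\rho_l$. Strong compatibility of the system forces $\rho_p$ to be modular as well, and therefore its reduction $\overline{\rho}_{\mid G_F}$ is modular, as required.

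\textbf{Main obstacle.} The heart of the argument is Step~2: designing $X$ so that the two level structures coexist, proving geometric irreducibility, and verifying non-emptiness of $X(\Q_v)$ at every $v$ in the relevant ramification set while keeping the local data at $p$ and $l$ compatible with the later appeal to Langlands--Tunnell and to the modularity lifting theorem. Subsidiary technical care is required to check that the large-image hypotheses needed for Taylor--Wiles style lifting survive restriction to $G_F$; indeed, this is precisely why one is willing to enlarge $\Q$ to a sufficiently generic totally real extension.
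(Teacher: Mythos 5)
Your overall architecture matches Taylor's argument as sketched in the paper: pass to a moduli space of Hilbert--Blumenthal abelian varieties carrying two prescribed residual level structures (one realizing $\overline{\rho}$ at a place over $p$, one realizing an ``anchor'' representation that is already known to be modular), apply Moret--Bailly to produce a point over a totally real Galois $F/\Q$ unramified at $p$, and then combine modularity of the anchor with a modularity lifting theorem and the compatible system of the abelian variety to conclude that $\overline{\rho}_{|G_F}$ is modular. The reduction of ``$F$ unramified at $p$'' to choosing good local points at $p$ is also correct in spirit (the paper additionally first twists so that $\overline{\rho}_{|G_v}$ looks ordinary with cyclotomic determinant, and disposes of the solvable-image case by Langlands--Tunnell at the outset).

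The genuine weak point is your choice of anchor in Steps 1 and 3. You propose to make the mod $3$ representation of $A$ accessible to Langlands--Tunnell, as in the $3$--$5$ trick for elliptic curves. But the abelian varieties produced here are of $\GL_2$-type with multiplication by the ring of integers of a totally real field $E$ of degree $\dim A$, so the residual representation at a place $w \mid 3$ of $E$ lands in $\GL_2(k_w)$ with $k_w$ typically much larger than $\F_3$ or $\F_9$; its projective image will in general not be solvable, and Langlands--Tunnell does not apply. Your phrase ``obtained by twisting a convenient globally available candidate'' hides exactly this difficulty, and forcing solvable image at $3$ is not something the Moret--Bailly argument gives you for free. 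The paper's proof (following Taylor) resolves this by prescribing the second level structure to be $\mathrm{Ind}_{G_M}^{G_\Q}\overline{\psi}$ for an imaginary quadratic field $M$ and a character $\psi$: a dihedral representation is modular by the classical theory of theta series attached to Hecke characters, in any residue characteristic and for any residue field, so the anchor modularity is automatic and one only needs a modularity lifting theorem at the auxiliary place (which, in the paper's formulation, is a second place above $p$ rather than a place above an auxiliary prime $l$). Replacing your Langlands--Tunnell anchor by this dihedral one repairs the argument and is the essential content of Taylor's construction.
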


If $\overline{\rho}$ has solvable image, then this problem may be handled by the methods of Langlands and Tunnell, so we will assume that this is not the case.  Passing to a suitable totally real extension $F / \Q$ that is unramified at $p$, we may assume that the determinant of $\overline{\rho}_{| G_v}$ is the mod $p$ cyclotomic character for all places $v | p$ of $F$, and that $\overline{\rho}$ has the following form at all $v | p$: 
$$ \overline{\rho}_{| G_v} \sim \left( \begin{array}{cc} \varepsilon \chi_v^{-1} & \ast \\ 0 & \chi_v \end{array} \right),$$
where $\varepsilon$ is the mod $p$ cyclotomic character and $\chi_v : G_v \to k^\ast$ is a character. Now $\overline{\rho}_{G_F}$ looks like it could be the restriction of an ordinary $p$-adic representation of Hodge-Tate weights $\{ 0, 1 \}$ coming from an abelian variety, and our task is to show that this is indeed the case.

Recall that if $A / F$ is an abelian variety, then for every finite place $v$ of $F$, the Galois group $G_F$ acts on the torsion $A[v]$, and the reductions of these representations give us a strictly compatible system $\{ \overline{\rho}_{A, v} \}$.  We are looking for an abelian variety $A / F$ such that $\overline{\rho}_{| G_F} \simeq \overline{\rho}_{A, v}$ for some $v$.  Let $v^\prime \neq v$ be another place of $F$ lying over $p$.  We will cleverly set up a moduli problem of abelian varieties in such a way that a handy theorem of Morel-Bailly \cite{MB}, quoted below, will give us exactly the existence of the $A$ that we need.

Let $M / \Q$ be an imaginary quadratic extension and $\psi: G_M \to \overline{\Q}_p^\ast$ a character.  Consider the moduli problem of triples $(A, \vphi_v, \vphi_{v^\prime})$ such that $A$ is a Hilbert-Blumenthal abelian variety (this is an abelian variety carrying an action of the ring of integers $\mathcal{O}_E$ of a specified totally real field $E$ and some additional structure such as a Rosati involution and a polarization; we will not give a precise definition here but only mention that there is a well-developed theory of moduli problems for these objects, founded by Rapoport in \cite{Rapoport}), and the $\vphi_v$ and $\vphi_{v^\prime}$ are isomorphisms $\vphi_v: \overline{\rho}_{| G_F} \stackrel{\sim}{\to} \overline{\rho}_{A, v}$ and $\vphi_{v^\prime} : \mathrm{Ind}_{G_M}^{G_\Q} \overline{\psi} \stackrel{\sim}{\to} \overline{\rho}_{A, v^\prime}$.  The general theory of Hilbert-Blumenthal abelian varieties tells us that this moduli problem is representable by some moduli space $X / \Q$, and if we knew that this space had a rational point, it would correspond to the abelian variety $A$ that we are looking for.

\begin{proposition}[Moret-Bailly]
Let $K$ be a number field and $S$ a finite set of places of $K$.  If $X / K$ is a geometrically irreducible smooth quasi-projective scheme and $X(K_v) \neq \varnothing$ for all $v \in S$, then $X(K_S)$ is Zariski dense in X.  Here $K_S / K$ is the maximal extension of $K$ in which all $v \in S$ split completely.
\end{proposition}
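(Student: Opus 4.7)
The plan is to reduce the proposition to the case where $X$ is a smooth projective geometrically irreducible \emph{curve} over $K$, and then to produce points defined over extensions of $K$ in which every $v \in S$ splits completely via a Hilbert-irreducibility argument compatible with prescribed local behavior.

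\emph{Reduction to a curve.} Given local points $P_v \in X(K_v)$ for $v \in S$, I would shrink $X$ to an affine open containing all the $P_v$ and fix a projective embedding of its closure. Repeated application of a Bertini-type theorem that can be arranged to meet prescribed $v$-adic constraints at the $P_v$ (one chooses hyperplanes defined over $K$ but approximating prescribed local hyperplanes through each $P_v$) produces, by descending induction on dimension, a smooth geometrically irreducible curve $C \subset X$ defined over $K$ and containing points $P'_v \in C(K_v)$ arbitrarily close to $P_v$. Passing to a smooth projective completion, we may assume $C$ is smooth, projective, and geometrically irreducible.

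\emph{Producing $K_S$-points on the curve.} Using Riemann-Roch on a sufficiently ample line bundle, I would next construct a finite morphism $\pi: C \to \mathbb{P}^1_K$ that is \'{e}tale at each $P'_v$ and whose images $\pi(P'_v) \in \mathbb{P}^1(K_v)$ can be prescribed. By weak approximation, choose $t \in \mathbb{P}^1(K)$ very close $v$-adically to $\pi(P'_v)$ for every $v \in S$. Since $\pi$ is \'{e}tale at $P'_v$, Hensel's lemma produces a $K_v$-point of the fiber $\pi^{-1}(t)$ close to $P'_v$. Now invoke a version of Hilbert's irreducibility theorem compatible with prescribing the $v$-adic location of $t$ at the places of $S$: for a dense set of admissible $t$, the fiber $\pi^{-1}(t)$ is geometrically irreducible over $K$. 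The residue field $L$ at any point of such a fiber then has degree $\deg \pi$ over $K$, and every $v \in S$ splits completely in $L/K$ because $\pi^{-1}(t)$ already has a $K_v$-point. Hence $L \subset K_S$, yielding a point of $C(K_S)$ arbitrarily close to $P_v$ for each $v \in S$. Since the $P_v$ could initially be chosen in any prescribed non-empty open of $X(K_v)$, this produces $K_S$-points in any non-empty Zariski open of $X$, giving Zariski density.

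\emph{Main obstacle.} The crucial technical ingredient is the strengthened Hilbert irreducibility theorem that is compatible with prescribing $v$-adic neighborhoods of $t$ at the places of $S$: one must know that the thin set of $t \in \mathbb{P}^1(K)$ producing reducible fibers of $\pi$ remains a proper subset after intersection with any product of prescribed $v$-adic open disks for $v \in S$. This compatibility of HIT with local approximation is the heart of Moret-Bailly's contribution, and is what allows the splitting condition defining $K_S$ to be met simultaneously with the local data. A secondary difficulty is executing the Bertini step with prescribed local behavior at all the $P_v$ at once, which is standard but requires some care in a number-field setting.
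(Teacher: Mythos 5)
The paper does not prove this proposition at all; it is quoted directly from Moret-Bailly \cite{MB}, so there is no internal argument to measure yours against. Within your sketch, the reduction to a smooth projective geometrically irreducible curve $C$ carrying $K_v$-points near the prescribed $P_v$, and the appeal to a Hilbert irreducibility theorem compatible with weak approximation at the finitely many places of $S$, are both legitimate and are genuine ingredients of known proofs.

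The gap is in your final deduction. From the fact that the irreducible fibre $\pi^{-1}(t) = \Spec L$ has a $K_v$-point you conclude that $v$ splits completely in $L/K$. But a $K_v$-point of $\Spec L$ is just a $K$-algebra homomorphism $L \to K_v$, i.e.\ a single place $w \mid v$ with $L_w = K_v$. The condition $L \subset K_S$ requires $L \otimes_K K_v \simeq K_v^{[L:K]}$ for every $v \in S$ (a place splits completely in $L$ if and only if it does so in the Galois closure), and one degree-one place above $v$ says nothing about the remaining $\deg \pi - 1$ places unless $L/K$ is Galois, which a general fibre of a general $\pi$ is not. What you actually need is that, for each $v \in S$, the parameter $t$ lies in the locus of $\mathbb{P}^1(K_v)$ over which the fibre of $\pi$ is \emph{totally} split, i.e.\ consists of $\deg \pi$ distinct $K_v$-rational points. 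That locus is open by Krasner/Hensel, but its nonemptiness is not guaranteed by $C(K_v) \neq \varnothing$ alone: you would have to produce, for each $v \in S$, an effective divisor on $C$ that is a sum of distinct $K_v$-points and lies in one fixed $K$-rational linear equivalence class, so that all of them can be realized as fibres of a single morphism $\pi$. Arranging this simultaneous total splitting is precisely the hard core of Moret-Bailly's theorem, and it is why his proof runs through the Picard group and ``Skolem problems'' (or, in other treatments, Rumely's capacity theory) rather than through Hilbert irreducibility plus weak approximation alone.
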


By choosing $E, v, v^\prime, \psi$ wisely, it can be arranged that the hypotheses of Moret-Bailly's theorem are satisfied for $K = \Q$ and $X / \Q$ the moduli space considered above.  In fact, Moret-Bailly's result appears to be far stronger than what we need to prove the existence of a rational point.  This gives us the freedom to strengthen Proposition \ref{modppotmod} by imposing a number of additional properties on the totally real field $F$, such as requiring it to be linearly disjoint from any specified number field. These strengthenings turn out to be essential, as they allow $\overline{\rho}_{| G_F}$ to satisfy the hypotheses of the modularity lifting theorems that we will call upon later.

\subsection{Deformation theory and modularity lifting results} \label{lifting}
In the previous section we laid out the ingredients of the proof of a potential modularity theorem for mod $p$ representations.  Now we want to build on that result to get a potential modularity theorem for $p$-adic representations, which will be used in Section \ref{compsys}.  Suppose that we are given a continuous, odd, irreducible mod $p$ Galois representation $\overline{\rho}: G_\Q \to \GL_2(k)$, as usual.  First of all, we want to find a nice $p$-adic lift of $\overline{\rho}$ as in the first item of the wish list of Section \ref{wishlist}.  

Consider the following deformation problem.  We want to study deformations $\rho: G_\Q \to \GL_2(A)$, where $A$ is a complete local noetherian algebra with residue field $k$, such that $\rho$ lifts $\overline{\rho}$.  In addition, 
for each prime $l$ we fix an equivalence class $\tau_l$ of representations of the inertia group $I_l \simeq G_{\Q_l^{nr}}$, such that all but finitely many of the $\tau_l$ are trivial.  Let $\chi_p: G_\Q \to \overline{\Q}_p^\ast$ be a character; we have $\chi_p = \omega_p^{k-1}$, where $\omega_p$ is the $p$-adic cyclotomic character.  We require that $\det \rho = \chi_p$, that $\tau_l$ be the restriction to inertia of the Weil-Deligne representation associated to $\rho_{| G_l}$ for each $l$, and that $\rho_{| G_p}$ be crystalline of Hodge-Tate weights $\{ 0, k-1 \}$.

By general deformation theory, this deformation problem is represented by a complete noetherian local ring $R^X_{\overline{\rho}, \Q}$ with residue field $k$.  It can be proved with very considerable effort, using Galois cohomology and the Euler characteristic formula (see \cite{Boeckle}), that $\dim R^X_{\overline{\rho}, \Q} \geq 1$ (by the dimension of a ring we mean the Krull dimension).  To get our nice lift of $\overline{\rho}$, we need to show that $R^X_{\overline{\rho}, \Q}$ has a point over an algebra of characteristic zero.  

Consider the Hecke algebra $\T = \Z_p[T_l: l \neq p]$, which acts on the space of modular forms $S_{k}(\Gamma_1(N(\overline{\rho}))$.  We get a natural surjection 
\begin{equation} \label{rtsurj}
R^X_{\overline{\rho}, \Q} \twoheadrightarrow \T_{\mathfrak{m}_{\overline{\rho}}}
\end{equation}
by the universal property of $R^X_{\overline{\rho},\Q}$.  On the other hand, if every deformation classified by $R^X_{\overline{\rho},\Q}$ is modular, then the universal deformation must factor through $\T_{\mathfrak{m}_{\overline{\rho}}}$ and therefore $R^X_{\overline{\rho},\Q} \simeq \T_{\mathfrak{m}_{\overline{\rho}}}$.  Proving a modularity lifting theorem, therefore, comes down to proving such an isomorphism, i.e. an ``$R = T$'' theorem.  One of the breakthroughs of the Taylor-Wiles method \cite{Wiles} \cite{TaylorWiles} was the understanding that modularity lifting results could often be reduced to statements about ring-theoretic properties of Hecke algebras.  For example, if $R^X_{\overline{\rho}, \Q}$ were an integral domain, then proving $\dim R^X_{\overline{\rho}, \Q} = \dim \T_{\mathfrak{m}_{\overline{\rho}}}$ would suffice to establish that (\ref{rtsurj}) is an isomorphism, since the quotient of an integral domain by a non-trivial ideal has strictly lower Krull dimension than the original ring.  In general, $\Spec R^X_{\overline{\rho}, \Q}$ will have more than one irreducible component, and proving that (\ref{rtsurj}) is an isomorphism often amounts to showing that each component overlaps the image of $\Spec \T_{\mathfrak{m}_{\overline{\rho}}}$ as well as comparing Krull dimensions.

In fact, we do not know that (\ref{rtsurj}) is an isomorphism.  However, we know from Proposition \ref{modppotmod} that $\overline{\rho}_{| G_F}$ is modular for some totally real fields $F$.  We may consider an analogous deformation problem to the one studied above, but over $F$; it is represented by a deformation ring $R^X_{\overline{\rho}, F}$.  Moreover, if we choose $F$ correctly, then a modularity lifting theorem is known by work of Diamond \cite{Diamond} and Fujiwara \cite{Fujiwara}; in that case one can prove that $R^X_{\overline{\rho}, F}$ is isomorphic to a suitable localized Hecke algebra $\T_F$.

The map $\Spec R^X_{\overline{\rho}, \Q} \to \Spec R^X_{\overline{\rho}, F}$ corresponding to restriction to the subgroup $G_F$ of representations of $G_\Q$ is clearly quasi-finite, i.e. has finite fibers.  Indeed, $G_F$ has finite index in $G_\Q$ and it is not hard to see that there are only finitely many ways to extend a representation of $G_F$ to the larger group $G_\Q$.  Moreover, the Hecke algebra $\T_F$ is finitely generated as a $\Z_p$-module, since it embeds in the endomorphism algebra of a suitable abelian variety.  This implies that $\T_F / (p)$ is a finite set, therefore that $R^X_{\overline{\rho}, F} / (p)$ is finite, and therefore that $R^X_{\overline{\rho}, \Q} / (p)$ is finite, hence has dimension zero.  Hence, any prime ideal of $R^X_{\overline{\rho}, \Q}$ containing $(p)$ is necessarily maximal.

On the other hand, recall that $\dim R^X_{\overline{\rho}, \Q} \geq 1$.  This means that there exists a non-maximal prime ideal $P \subset R^X_{\overline{\rho}, \Q}$.  By the above, we know that $P$ does not contain $(p)$.  Since $R^X_{\overline{\rho}, \Q}$ is finitely generated over $\Z_p$, it follows that the quotient $R^X_{\overline{\rho}, \Q} / P$ embeds into the ring of integers $\mathcal{O}_L$ of a suitable finite extension $L / \Q_p$.  Now by the universal property of $R^X_{\overline{\rho}, \Q}$, the embedding $R^X_{\overline{\rho}, \Q} / P \hookrightarrow \mathcal{O}_L$ corresponds to a $p$-adic Galois representation $\rho: G_\Q \to \GL_2(\mathcal{O}_L)$ lifting $\overline{\rho}$.

We have now achieved the first part of the wish list in Section \ref{wishlist}.  In fact, by doing all of this more carefully we could ensure that the obtained lift $\rho$ has a variety of good properties.

\subsection{Constructing compatible systems} \label{compsys}
In the previous section, we started with a mod $p$ representation $\overline{\rho}: G_\Q \to \GL_2(\Fpbar)$ and found a finite extension $L / \Q_p$ and a $p$-adic representation $\rho: G_\Q \to \GL_2(\mathcal{O}_L)$ lifting $\overline{\rho}$, thereby fulfilling the first part of the wish list of section \ref{wishlist}.  In this section we will build a compatible system around $\rho$.

By Taylor's potential modularity theorem (Proposition \ref{modppotmod}), we know that there is a Galois totally real field $F / \Q$ such that $\overline{\rho}_{| G_F}$ is modular.  By the modularity lifting theorems of Diamond and Fujiwara that were mentioned in the previous section, we know that $\rho_{| G_F}$ is modular as well.  Let $G = \Gal(F / \Q)$.  By Brauer's theorem (see, for instance, chapter 10 of \cite{Serre}) there exist solvable subgroups $H_i \subset G$, integers $n_i \in \Z$, and one-dimensional representations $\chi_i$ of $H_i$ such that
\begin{equation}
\mathbf{1} = \sum_{i = 1}^t n_i \mathrm{Ind}_{H_i}^G \chi_i,
\end{equation}
in the Grothendieck group of $G$, where $\mathbf{1}$ is the trivial representation of $G$.  Note that even though $\mathbf{1}$ is a true representation, some of the $n_i$ might be negative.  This will cause us problems later.  Set $F_i$ to be the fixed field of $H_i$.  Tensoring with $\rho$, we obtain that
\begin{equation} \label{gfieq}
\rho = \sum_{i = 1}^t n_i \mathrm{Ind}_{G_{F_i}}^{G_\Q} (\rho_{| {G_{F_i}}} \tensor \chi_i).
\end{equation} 

Since $\Gal(F / F_i) = H_i$ is solvable, we conclude by Langlands-Tunnell solvable base change that each $\rho_{| G_{F_i}}$ arises from an automorphic form $\pi_i$ on $F_i$.  Hence we can trivially rewrite (\ref{gfieq}) as
\begin{equation}
\rho = \rho_p = \sum_{i = 1}^t n_i \mathrm{Ind}_{G_{F_i}}^{G_\Q} (\rho_{\pi_i, p} \tensor \chi_i).
\end{equation}

Since each $\rho_{\pi_i, p}$ comes from an automorphic form and therefore sits in a compatible system of representations of $G_{F_i}$, it is very tempting to define
\begin{equation}
\rho_l = \sum_{i = 1}^t n_i \mathrm{Ind}_{G_{F_i}}^{G_\Q} (\rho_{\pi_i, l} \tensor \chi_i)
\end{equation}
for arbitrary primes $l$.  In fact, this idea works.  If we knew that the $\rho_l$ were true representations and not just virtual ones, then the compatible system properties of the $\{ \rho_{\pi_i, l} \}$ would easily imply that $\{ \rho_l \}$ is a compatible system as well.  In fact, it can indeed be checked that the $\{ \rho_l \}$ are true representations.  This fulfills the second part of the wish list.

\section{Proof of Serre's conjecture}
We are finally in a position to give an exceedingly impressionistic sketch of the strategy behind the proof of Serre's conjecture.  For more detail, the reader is referred to Wintenberger's excellent expository article \cite{WintenbergerBourbaki} and to Khare's exposition \cite{KhareSurvey}, which has somewhat fewer details but paints the big picture in bold strokes.  For simplicity, we will only consider the level one case of Serre's conjecture.  This means that we start with a Galois representation $\overline{\rho} : G_\Q \to \GL_2(\Fpbar)$ that is continuous, irreducible, odd, and unramified outside $p$.  Recall from Section \ref{levelsection} that the lack of ramification outside $p$ means that the prime-to-$p$ part of the Artin conductor $\mathfrak{n}(\overline{\rho})$ is trivial, and hence $N(\overline{\rho}) = 1$.  We aim to prove that $\overline{\rho}$ is modular.

It is important to note that some special cases of Serre's conjecture were known well before Khare's idea of applying Taylor's potential modularity results and Kisin's modularity lifting techniques to this problem.  In the 1970's Tate used discriminant bounds to prove that there are no continuous irreducible odd representations $\overline{\rho} : G_\Q \to \GL_2(\overline{\F}_2)$, and therefore that the level one case of Serre's modularity conjecture is vacuously true for $p = 2$.  Serre extended his argument to $p = 3$ shortly afterwards, and these two results are essential to the work of Khare and Wintenberger, since they constitute the base cases of their induction argument.  We note that the level one case of Serre's conjecture for $p = 5$ was proved by Brueggeman \cite{Brueggeman} contingent on the generalized Riemann hypothesis, and that, with some local hypotheses at small primes but no restriction on the level, the conjecture was proved for $\overline{\rho}$ with image lying in $\GL_2(\F_7)$ by Manoharmayum \cite{Manoharmayum} and for $\overline{\rho}$ with image lying in $\GL_2(\F_9)$ by Ellenberg \cite{Ellenberg}.

To give a taste of the inductive argument that proves Serre's conjecture, and to illustrate its crucial reliance on modularity lifting theorems, we will first flagrantly disregard the current reality and describe what the proof would look like if modularity lifting technology were more advanced than it actually is.  Assume the following, for the moment:

\begin{dream}
Let $\rho: G_\Q \to \GL_2(\overline{\Q}_p)$ be continuous, irreducible, unramified outside $p$, and crystalline at $p$ with Hodge-Tate weights $\{ w, 0 \}$ for some $w \leq 2p$.  Suppose that its reduction $\overline{\rho}$ is modular.  Then $\rho$ is modular.
\end{dream}

This dream follows, of course, from the Fontaine-Mazur conjecture.  It was considered totally out of reach when Khare and Wintenberger did their work, but such a modularity lifting result has since been proved in most cases by Kisin \cite{KisinFM}.  In fact, since Serre's conjecture is now known, his work implies most cases of Fontaine-Mazur for two-dimensional representations of $G_\Q$.  In any case, let us assume the dream and then prove Serre's conjecture in level one.

Let $p_n$ be the $n$-th prime.  We will prove Serre's conjecture by induction on $n$.  It is known for $p_1 = 2$ and $p_2 = 3$ by the theorems of Tate and Serre that were mentioned above.  Suppose that it is also known for $p_{n-1}$.  Let $\overline{\rho}: G_\Q \to \GL_2(\overline{\F}_{p_{n}})$ be continuous, irreducible, odd, and unramified outside $p_n$.  By the methods of Sections \ref{lifting} and \ref{compsys}, we can find a lift $\rho$ of $\overline{\rho}$ that sits in a compatible system $\{ \rho_l \}$, so that $\rho_{p_n} \simeq \rho$.  Consider $\overline{\rho}_{p_{n-1}}$; it is modular by induction.  Moreover, by the properties of compatible systems, $\rho_{p_{n-1}}$ is unramified outside $p_{n-1}$ and is crystalline of Hodge-Tate weight $(0, k(\overline{\rho}) - 1)$.  As we saw at the beginning of these notes, up to a twist we can assume that $k(\overline{\rho}) \leq p_n \leq 2 p_{n-1}$, where the second inequality is Bertrand's Postulate.  By the Dream, $\rho_{p_{n-1}}$ is modular.  Hence $\rho_{p_n}$ is modular by the compatible system, and hence $\overline{\rho}$ is modular and we are done.

The powerful modularity lifting theorem of the Dream can be seen as a fulfillment of the third part of the wish list of Section \ref{wishlist}.  Even though the Dream is not yet known, the modularity lifting theorems available to Khare and Wintenberger in 2005 were enough to prove Serre's conjecture, albeit with lots of technical work.  The modularity lifting theorems available now, and still more those available then, come with long lists of technical hypotheses, and one must be very careful to ensure that the liftings of $\overline{\rho}$ and the compatible systems obtained from the methods of Sections \ref{lifting} and \ref{compsys} satisfy these.  In these notes we have entirely ignored these technical points, which complicate the work tremendously.  However, at its core the basic idea is the simple one presented here.

We conclude with the unfortunate observation that it does not appear to be possible, at least not without a major new idea, to generalize the beautiful argument of Khare and Wintenberger to obtain a proof of the generalizations of Serre's conjecture to totally real fields that were incorporated into Conjecture \ref{strongserre} above.  While all the ingredients of their proof -- potential modularity, construction of lifts, compatible systems, modularity lifting theorems -- are less developed for arbitrary totally real fields than for $\Q$, a more fundamental problem is that any inductive argument on the places of $F$ would require that enough base cases be proved first, and we have no idea how to prove them.  Recall that Tate and Serre proved (the level one case of) Serre's conjecture for $p = 2$ and $p = 3$ by showing that it was vacuously true, i.e. that there were no $\overline{\rho}$ that were continuous, irreducible, odd, and unramified outside $p$.  While analogous non-existence theorems have been proved for some small primes and a few specific quadratic real fields (see, for instance, \cite{MT} and \cite{Sengun}), we know that for general totally real fields, even quadratic ones, Serre's conjecture is never vacuously true.  Indeed, for general totally real fields, for all $p$ there exist continuous, irreducible, odd mod $p$ Galois representations that are unramified outside $p$; see the introduction to \cite{BDJ} for an example in the case of $F = \Q(\sqrt{29})$.  Thus, to get the base case for an induction argument, one would need to establish a sufficiently large number of non-vacuous cases of Serre's conjecture, and it is not clear at all at the present time how to attack this problem.  Serre's modularity conjecture will likely continue to be an important motivation and source of research problems for some time to come.

\bibliographystyle{plain}
\bibliography{smclux}

\begin{thebibliography}{10}

\bibitem{Abe}
Noriyuki Abe.
\newblock On a classification of irreducible admissible modulo $p$
  representations of a $p$-adic split reductive group.
\newblock Preprint, available at {\tt{http://arxiv.org/pdf/1103.2525v3}}.

\bibitem{ASCrelle}
Avner Ash and Glenn Stevens.
\newblock Cohomology of arithmetic groups and congruences between systems of
  {H}ecke eigenvalues.
\newblock {\em J. Reine Angew. Math.}, 365:192--220, 1986.

\bibitem{BLGG}
Thomas Barnet-Lamb, Toby Gee, and David Geraghty.
\newblock {S}erre weights for rank two unitary groups.
\newblock Preprint, 2012.

\bibitem{BarthelLivne}
L.~Barthel and R.~Livn{\'e}.
\newblock Irreducible modular representations of {${\rm GL}\sb 2$} of a local
  field.
\newblock {\em Duke Math. J.}, 75(2):261--292, 1994.

\bibitem{Berger}
Laurent Berger.
\newblock Central characters for smooth irreducible modular representations of
  {${\rm GL}\sb 2 ({\bf Q}\sb p$)}.
\newblock {\em Rendiconti del Seminario Matematico della Universit\`{a} di
  Padova}, 127, 2012.

\bibitem{BergerHarcos}
Tobias Berger and Gergely Harcos.
\newblock {$l$}-adic representations associated to modular forms over imaginary
  quadratic fields.
\newblock {\em Int. Math. Res. Not. IMRN}, (23):Art. ID rnm113, 16, 2007.

\bibitem{BergerKlosin}
Tobias Berger and Krzysztof Klosin.
\newblock An {$R=T$} theorem for imaginary quadratic fields.
\newblock {\em Math. Ann.}, 349(3):675--703, 2011.

\bibitem{Boeckle}
Gebhard B{\"o}ckle.
\newblock A local-to-global principle for deformations of {G}alois
  representations.
\newblock {\em J. Reine Angew. Math.}, 509:199--236, 1999.

\bibitem{BLR}
Nigel Boston, Hendrik~W. Lenstra, Jr., and Kenneth~A. Ribet.
\newblock Quotients of group rings arising from two-dimensional
  representations.
\newblock {\em C. R. Acad. Sci. Paris S\'er. I Math.}, 312(4):323--328, 1991.

\bibitem{Breuil}
Christophe Breuil.
\newblock Sur quelques repr\'esentations modulaires et {$p$}-adiques de {${\rm
  GL}\sb 2(\bold Q\sb p)$}. {I}.
\newblock {\em Compositio Math.}, 138(2):165--188, 2003.

\bibitem{BMH}
Christophe Breuil and Ariane M{\'e}zard.
\newblock Multiplicit\'es modulaires et repr\'esentations de {${\rm GL}\sb
  2({\bf Z}\sb p)$} et de {${\rm Gal}(\overline{\bf Q}\sb p/{\bf Q}\sb p)$} en
  {$l=p$}.
\newblock {\em Duke Math. J.}, 115(2):205--310, 2002.
\newblock With an appendix by Guy Henniart.

\bibitem{BreuilPaskunas}
Christophe Breuil and Vytautas Paskunas.
\newblock Towards a modulo $p$ {L}anglands correspondence for $\mathrm{GL}_2$.
\newblock {\em Memoirs Amer. Math. Soc.}, 216, 2012.

\bibitem{Brueggeman}
Sharon Brueggeman.
\newblock The nonexistence of certain {G}alois extensions unramified outside
  {$5$}.
\newblock {\em J. Number Theory}, 75(1):47--52, 1999.

\bibitem{BDJ}
Kevin Buzzard, Fred Diamond, and Frazer Jarvis.
\newblock On {S}erre's conjecture for mod {$l$} {G}alois representations over
  totally real fields.
\newblock {\em Duke Math. J.}, 55:105--161, 2010.

\bibitem{CalegariGeraghty}
Frank Calegari and David Geraghty.
\newblock Modularity lifting beyond the {T}aylor-{W}iles method.
\newblock Preprint.

\bibitem{Carayol}
Henri Carayol.
\newblock Sur la mauvaise r\'eduction des courbes de {S}himura.
\newblock {\em Compositio Math.}, 59(2):151--230, 1986.

\bibitem{Diamond}
Fred Diamond.
\newblock The {T}aylor-{W}iles construction and multiplicity one.
\newblock {\em Invent. Math.}, 128(2):379--391, 1997.

\bibitem{Edixhoven}
Bas Edixhoven.
\newblock The weight in {S}erre's conjectures on modular forms.
\newblock {\em Invent. Math.}, 109(3):563--594, 1992.

\bibitem{EdixhovenFLT}
Bas Edixhoven.
\newblock Serre's conjecture.
\newblock In {\em Modular forms and Fermat's last theorem (Boston, MA, 1995)},
  pages 209--242. Springer, New York, 1997.

\bibitem{Ellenberg}
Jordan~S. Ellenberg.
\newblock Serre's conjecture over {$\Bbb F_9$}.
\newblock {\em Ann. of Math. (2)}, 161(3):1111--1142, 2005.

\bibitem{EGH}
Matthew Emerton, Toby Gee, and Florian Herzig.
\newblock Explicit {S}erre weight conjectures.
\newblock In preparation.

\bibitem{Fujiwara}
Kazuhiro Fujiwara.
\newblock Galois deformations and arithmetic geometry of {S}himura varieties.
\newblock In {\em International {C}ongress of {M}athematicians. {V}ol. {II}},
  pages 347--371. Eur. Math. Soc., Z\"urich, 2006.

\bibitem{GeeBDJ}
Toby Gee.
\newblock On the weights of mod $p$ {H}ilbert modular forms.
\newblock {\em Invent. Math.}, 184:1--46, 2011.

\bibitem{GeeKisin}
Toby Gee and Mark Kisin.
\newblock The {B}reuil-{M}\'ezard conjecture for potentially {B}arsotti-{T}ate
  representations.
\newblock Preprint, 2012.

\bibitem{GSL}
Toby Gee, Tong Liu, and David Savitt.
\newblock Crystalline extensions and the weight part of {S}erre's conjecture.
\newblock {\em Algebra and Number Theory}, To appear.

\bibitem{GeeSavitt}
Toby Gee and David Savitt.
\newblock {S}erre weights for mod $p$ {H}ilbert modular forms: the totally
  ramified case.
\newblock {\em J. Reine Angew. Math.}, 660:1--26, 2011.

\bibitem{Green}
J.~A. Green.
\newblock The characters of the finite general linear groups.
\newblock {\em Trans. Amer. Math. Soc.}, 80:402--447, 1955.

\bibitem{TaylorImagI}
Michael Harris, David Soudry, and Richard Taylor.
\newblock {$l$}-adic representations associated to modular forms over imaginary
  quadratic fields. {I}. {L}ifting to {${\rm GSp}_4({\bf Q})$}.
\newblock {\em Invent. Math.}, 112(2):377--411, 1993.

\bibitem{HT}
Michael Harris and Richard Taylor.
\newblock {\em The geometry and cohomology of some simple {S}himura varieties},
  volume 151 of {\em Annals of Mathematics Studies}.
\newblock Princeton University Press, Princeton, NJ, 2001.
\newblock With an appendix by Vladimir G. Berkovich.

\bibitem{Henniart}
Guy Henniart.
\newblock Une preuve simple des conjectures de {L}anglands pour {${\rm GL}(n)$}
  sur un corps {$p$}-adique.
\newblock {\em Invent. Math.}, 139(2):439--455, 2000.

\bibitem{Florianthesis}
Florian Herzig.
\newblock The weight in a {S}erre-type conjecture for tame $n$-dimensional
  {G}alois representations.
\newblock {\em Duke Math. J.}, 149:37--116, 2009.

\bibitem{HerzigParabInd}
Florian Herzig.
\newblock The classification of irreducible admissible mod $p$ representations
  of a $p$-adic $\mathrm{{GL}}_n$.
\newblock {\em Invent. Math.}, 186:373--434, 2011.

\bibitem{FlorianSatake}
Florian Herzig.
\newblock A {S}atake isomorphism in characteristic {$p$}.
\newblock {\em Compos. Math.}, 147(1):263--283, 2011.

\bibitem{Jarvis}
Frazer Jarvis.
\newblock On {G}alois representations associated to {H}ilbert modular forms of
  low weight.
\newblock {\em J. Reine Angew. Math.}, 491:199--216, 1997.

\bibitem{KhareSurvey}
Chandrashekhar Khare.
\newblock Serre's modularity conjecture: a survey of the level one case.
\newblock In {\em {$L$}-functions and {G}alois representations}, volume 320 of
  {\em London Math. Soc. Lecture Note Ser.}, pages 270--299. Cambridge Univ.
  Press, Cambridge, 2007.

\bibitem{KWfirst}
Chandrashekhar Khare and Jean-Pierre Wintenberger.
\newblock Serre's modularity conjecture. {I}.
\newblock {\em Invent. Math.}, 178(3):485--504, 2009.

\bibitem{KWsecond}
Chandrashekhar Khare and Jean-Pierre Wintenberger.
\newblock Serre's modularity conjecture. {II}.
\newblock {\em Invent. Math.}, 178(3):505--586, 2009.

\bibitem{KisinFM}
Mark Kisin.
\newblock The {F}ontaine-{M}azur conjecture for {${\rm GL}_2$}.
\newblock {\em J. Amer. Math. Soc.}, 22(3):641--690, 2009.

\bibitem{Kisin}
Mark Kisin.
\newblock Modularity of 2-adic {B}arsotti-{T}ate representations.
\newblock {\em Invent. Math.}, 178(3):587--634, 2009.

\bibitem{Manoharmayum}
Jayanta Manoharmayum.
\newblock Serre's conjecture for mod 7 {G}alois representations.
\newblock In {\em Modular curves and abelian varieties}, volume 224 of {\em
  Progr. Math.}, pages 141--149. Birkh\"auser, Basel, 2004.

\bibitem{MT}
Hyunsuk Moon and Yuichiro Taguchi.
\newblock The non-existence of certain mod 2 {G}alois representations of some
  small quadratic fields.
\newblock {\em Proc. Japan Acad. Ser. A Math. Sci.}, 84(5):63--67, 2008.

\bibitem{MB}
Laurent Moret-Bailly.
\newblock Groupes de {P}icard et probl\`emes de {S}kolem. {I}, {II}.
\newblock {\em Ann. Sci. \'Ecole Norm. Sup. (4)}, 22(2):161--179, 181--194,
  1989.

\bibitem{Rapoport}
Michael Rapoport.
\newblock Compactifications de l'espace de modules de {H}ilbert-{B}lumenthal.
\newblock {\em Compositio Math.}, 36(3):255--335, 1978.

\bibitem{Raynaud}
Michel Raynaud.
\newblock Sch\'emas en groupes de type {$(p,\dots, p)$}.
\newblock {\em Bull. Soc. Math. France}, 102:241--280, 1974.

\bibitem{ramifiedpaper}
Michael~M. Schein.
\newblock Weights in {S}erre's conjecture for {H}ilbert modular forms: the
  ramified case.
\newblock {\em Israel J. Math.}, 166:369--391, 2008.

\bibitem{thesispaper}
Michael~M. Schein.
\newblock Weights of {G}alois representations associated to {H}ilbert modular
  forms.
\newblock {\em J. Reine Angew. Math}, 622:57--94, 2008.

\bibitem{cuspidalpaper}
Michael~M. Schein.
\newblock Reduction modulo $p$ of cuspidal representations and weights in
  {S}erre's conjecture.
\newblock {\em Bull. London Math. Soc.}, 41:147--154, 2009.

\bibitem{Scheingln}
Michael~M. Schein.
\newblock Weights in generalizations of {S}erre's conjecture and the mod $p$
  local {L}anglands correspondence.
\newblock In {\em Symmetries in algebra and number theory}, pages 71--93.
  Universit\"{a}tsverlag G\"{o}ttingen, G\"{o}ttingen, 2009.

\bibitem{Schraen}
Benjamin Schraen.
\newblock Sur la pr\'esentation des r\'epresentations supersinguli\`eres de
  {${\rm GL} {\sb 2} (F)$}.
\newblock Preprint, 2012.

\bibitem{Sengun}
Mehmet~Haluk {\c{S}}eng{\"u}n.
\newblock The nonexistence of certain representations of the absolute {G}alois
  group of quadratic fields.
\newblock {\em Proc. Amer. Math. Soc.}, 137(1):27--35, 2009.

\bibitem{SerreCL}
Jean-Pierre Serre.
\newblock {\em Corps locaux}.
\newblock Hermann, Paris, 1968.
\newblock Deuxi{\`e}me {\'e}dition, Publications de l'Universit{\'e} de
  Nancago, No. VIII.

\bibitem{SerreConjFirst}
Jean-Pierre Serre.
\newblock Valeurs propres des op\'erateurs de {H}ecke modulo {$l$}.
\newblock In {\em Journ\'ees {A}rithm\'etiques de {B}ordeaux ({C}onf., {U}niv.
  {B}ordeaux, 1974)}, pages 109--117. Ast\'erisque, Nos. 24--25. Soc. Math.
  France, Paris, 1975.

\bibitem{Serre}
Jean-Pierre Serre.
\newblock {\em Linear representations of finite groups}.
\newblock Springer-Verlag, New York, 1977.
\newblock Translated from the second French edition by Leonard L. Scott,
  Graduate Texts in Mathematics, Vol. 42.

\bibitem{SerreConj}
Jean-Pierre Serre.
\newblock Sur les repr\'esentations modulaires de degr\'e {$2$} de {${\rm
  Gal}(\overline{\bf Q}/{\bf Q})$}.
\newblock {\em Duke Math. J.}, 54(1):179--230, 1987.

\bibitem{Skinner}
Christopher Skinner.
\newblock A note on the {$p$}-adic {G}alois representations attached to
  {H}ilbert modular forms.
\newblock {\em Doc. Math.}, 14:241--258, 2009.

\bibitem{Tate}
J.~Tate.
\newblock Number theoretic background.
\newblock In {\em Automorphic forms, representations and {$L$}-functions
  ({P}roc. {S}ympos. {P}ure {M}ath., {O}regon {S}tate {U}niv., {C}orvallis,
  {O}re., 1977), {P}art 2}, Proc. Sympos. Pure Math., XXXIII, pages 3--26.
  Amer. Math. Soc., Providence, R.I., 1979.

\bibitem{TaylorImagII}
Richard Taylor.
\newblock {$l$}-adic representations associated to modular forms over imaginary
  quadratic fields. {II}.
\newblock {\em Invent. Math.}, 116(1-3):619--643, 1994.

\bibitem{TaylorJussieu}
Richard Taylor.
\newblock Remarks on a conjecture of {F}ontaine and {M}azur.
\newblock {\em J. Inst. Math. Jussieu}, 1(1):125--143, 2002.

\bibitem{TaylorWiles}
Richard Taylor and Andrew Wiles.
\newblock Ring-theoretic properties of certain {H}ecke algebras.
\newblock {\em Ann. of Math. (2)}, 141(3):553--572, 1995.

\bibitem{Vigneras}
Marie-France Vign{\'e}ras.
\newblock La conjecture de {L}anglands locale pour {${\rm GL}(n,F)$} modulo
  {$l$} quand {$l\not= p,\ l>n$}.
\newblock {\em Ann. Sci. \'Ecole Norm. Sup. (4)}, 34(6):789--816, 2001.

\bibitem{Wiles}
Andrew Wiles.
\newblock Modular elliptic curves and {F}ermat's last theorem.
\newblock {\em Ann. of Math. (2)}, 141(3):443--551, 1995.

\bibitem{WintenbergerBourbaki}
Jean-Pierre Wintenberger.
\newblock La conjecture de modularit\'e de {S}erre: le cas de conducteur
  (d'apr\`es {C}. {K}hare).
\newblock {\em Ast\'erisque}, (311):Exp. No. 956, viii, 99--121, 2007.
\newblock S{\'e}minaire Bourbaki. Vol. 2005/2006.

\end{thebibliography}

\end{document}